\documentclass[12pt,reqno]{amsart}
\usepackage[left=3cm,top=2cm,right=3cm,bottom=2cm]{geometry}
\usepackage{amssymb}
\usepackage{amsbsy}
\usepackage{epsfig}
\usepackage{tikz}

\begin{document}
\title{A Linear algorithm for obtaining the Laplacian eigenvalues of a cograph}
\author{Guantao Chen}
\address{Departament of Mathematics and Statistics, Georgia State University, Atlanta 30303 USA}
\email{\tt gchen@gsu.edu}
\author{  Fernando C. Tura }
\address{ Departamento de Matem\'atica, UFSM,97105--900 Santa Maria, RS, Brazil}
\email{\tt fernando.tura@ufsm.br}
\pdfpagewidth 8.5 in \pdfpageheight 11 in

\newcommand{\bfx}{{\mathbf x}}
\newcommand{\casei}{{\bf case~1}}
\newcommand{\subia}{{\bf subcase~1a}}
\newcommand{\subib}{{\bf subcase~1b}}
\newcommand{\subic}{{\bf subcase~1c}}
\newcommand{\caseii}{{\bf case~2}}
\newcommand{\subiia}{{\bf subcase~2a}}
\newcommand{\subiib}{{\bf subcase~2b}}
\newcommand{\caseiii}{{\bf case~3}}
\newcommand{\myvar}{x}
\newcommand{\exvar}{\frac{\sqrt{3} + 1}{2}}
\newcommand{\Prf}{{\noindent \bf Proof: }}
\newcommand{\PrfSketch}{{\bf Proof (Sketch): }}
\newcommand{\boldQ}{\mbox{\bf Q}}
\newcommand{\boldR}{\mbox{\bf R}}
\newcommand{\boldZ}{\mbox{\bf Z}}
\newcommand{\boldc}{\mbox{\bf c}}
\newcommand{\sign}{\mbox{sign}}
\newcommand{\alphaseq}{{\pmb \alpha}_{G,\myvar}}
\newcommand{\alphaseqGprime}{{\pmb \alpha}_{G^\prime,\myvar}}
\newcommand{\alphaseqlam}{{\pmb \alpha}_{G,-\lambdamin}}
\newtheorem{Thr}{Theorem}[section]
\newtheorem{Pro}{Proposition}
\newtheorem{Que}{Question}
\newtheorem{Con}{Conjecture}
\newtheorem{Cor}{Corollary}
\newtheorem{Lem}{Lemma}[section]
\newtheorem{Fac}{Fact}
\newtheorem{Ex}{Example}[section]
\newtheorem{Def}{Definition}[section]
\newtheorem{Prop}{Proposition}[section]
\def\floor#1{\left\lfloor{#1}\right\rfloor}

\newenvironment{my_enumerate}{
\begin{enumerate}
  \setlength{\baselineskip}{14pt}
  \setlength{\parskip}{0pt}
  \setlength{\parsep}{0pt}}{\end{enumerate}
}
\newenvironment{my_description}{
\begin{description}
  \setlength{\baselineskip}{14pt}
  \setlength{\parskip}{0pt}
  \setlength{\parsep}{0pt}}{\end{description}
}

\maketitle
\begin{abstract}  
In this paper, we give an $O(n)$ time and space algorithm  for obtaining the Laplacian eigenvalues of a cograph. This approach is more efficient as there is no need to directly compute the eigenvalues of Laplacian matrix related to this class of graph. As an application, we use this algorithm as a tool for obtaining a closed formula for the number of spanning trees of  a cograph.
\end{abstract}

{\bf keywords:} cograph, Laplacian eigenvalues, spanning trees. \\

{\bf AMS subject classification:} 15A18, 05C50, 05C85.

\setlength{\baselineskip}{24pt}
\section{Introduction}
\label{intro}

Let  $G= (V,E)$ be an undirected graph with vertex set $V$ and edge set $E$, without loops or multiple edges.  For $v\in V$, $N(v)$ denotes the  open neighborhood  of $v$, that is, $\{w|\{v,w\}\in E\}$. The  closed neighborhood  $N[v] = N(v) \cup \{v\}$. 
Two vertices  $u,v \in V$ are {\em duplicate}  if $N(u)=N(v)$ and {\em coduplicate} if $N[u]=N[v].$
If $|V| = n$, the   adjacency matrix  $A= (a_{ij})$,  is
 the $n \times n$   matrix of zeros and ones  such that $a_{ij} = 1$ if
there is an edge between $v_i$ and $v_j$, and 0 otherwise.   

The degree sequence of a graph $G$ of order $n$ is the sequence $\delta(v_1), \ldots, \delta(v_n)$ where $\delta(v_i)$ is the degree of vertex $v_i.$   Let $\delta(G)$ be the diagonal matrix of vertex degrees of $G.$ The Laplacian matrix of $G$ is defined as $L(G) = \delta(G) - A(G).$ The Laplacian eigenvalues of $G$ are the eigenvalues of its Laplacian matrix. The Laplacian spectrum of $G$ consists of eigenvalues of $L(G)$ and we write $Spect_{L}(G)=\{ \mu_n, \mu_{n-1}, \ldots, \mu_2, \mu_1 =0 \}.$


Cographs are defined as $P_4$-free graphs, that is,  they do not contain no path on four vertices as an induced subgraph. They are also known as {\em complement reducible graphs} \cite{Stewart}. Linear time recognition algorithms for cographs are given in \cite{Paul, Stewart2,Michel}.
Two important subclasses of cographs are known as thresholds graphs and weakly quasi-threshold graphs \cite{Abreu2,Milica, Bapat,JOT2019,Mah95, Merris, Yan} for which there are several linear time recognition algorithms \cite{Le, Golu, nikolo3}.


It is well known that cographs can be represented by rooted trees, and that a lot of structural and spectral properties about a cograph may be obtained from the tree that produces it  (see, for example \cite{Allem2, Allem, 
BSS2011,JTT2016, Jones,tura22, Mousavi, Niko}).  Some of these works  are worth noting, as we describe below.

First, although the paper \cite{JTT2016} present a linear algorithm for locating the  eigenvalues of a cograph, however in practice for finding a specific value, we must call the algorithm more than one time and then use a divide-and-conquer algorithm for getting an efficient aproximation to it.
Second, the paper  \cite{Jones} explores symmetries present in the tree of a cograph for reducing the complexity of computing its eigenvalues,   but it is still necessary to compute the remaining eigenvalues of matrix produced by it.
 Finally, the paper \cite{Niko} uses the tree  of a cograph  to exhibit an algorithmic procedure for obtaining the number of spanning trees of a cograph.

 
 The purpose of this paper is to present
 a linear algorithm for obtaining the Laplacian eigenvalues of a cograph directly from its tree representation. This  approach is based on the previous algorithms, more specifically, it uses some ideas of  paper Jones et al. \cite{Jones} and the Diagonalization algorithm (see Section 2). Furthermore,  it is more efficient than method presented previously, since there is no need to directly  compute the eigenvalues of Laplacian matrix related to this class of graph.
 Our paper is similar in spirit, but using different proprieties of the class of cographs and the associated trees, to the paper of Bapat  \cite{Bapat} which uses a kind of tree for determining the integers Laplacian eigenvalues of a weakly quasi-threshold graphs.
 As an application, we use the algorithm proposed here as a tool for obtaining a closed formula for the number of spanning trees of  a cograph.

Here is an outline of this paper.
In Section \ref{Sec2}, we provide definitions and known results needed for the development of our paper. 
In Section \ref{Sec3}, we present a linear algorithm for obtaining the Laplacian eigenvalues of a cograph directly from its tree representation.
In Section \ref{Sec4}, we present a closed formula for obtaining the number of spanning trees of a cograph.


\section{Background results}
\label{Sec2}

\subsection{Cographs and cotrees}

In what follows, $G$ denotes a graph with $n$ vertices, while that $\overline{G}$ its complement. 
As usual, $K_n$ and  $\overline{K}_n$ represent the complete graph and the edgeless graph of order $n,$ respectively.
Now, we recall   the definitions of some operations  with graphs that will be used. For this, 
let $G_1= (V_1, E_1)$ and $G_2= (V_2, E_2)$  be  vertex disjoint graphs:
\begin{itemize}
\item The {\em union} of graphs $G_1$ and $G_2$  is the graph $G_1 \cup G_2$  whose vertex set is $V_1 \cup V_2$ and whose edge set is $E_1 \cup E_2.$
\item The {\em join}   of graphs $G_1$ and $G_2$  is the graph $G_1 \otimes G_2$ obtained from $G_1 \cup G_2$ by joining every vertex of $G_1$
with every vertex of $G_2.$
\end{itemize}


 We note that for any graph $G$ on $n$ vertices with $Spect_{L}(G)=\{ \mu_n,  \ldots, \mu_2, \mu_1 =0 \},$ its largest Laplacian eigenvalue $\mu_n(G),$ satisfies $\mu_n(G) \leq n,$ with equality holding if and only if $G$ is a join of two graphs.  Finally, if $\mu_i(G)$ is a Laplacian eigenvalue of $G$ on $n$ vertices then $n -\mu_i(G)$ is a Laplacian eigenvalue of $\overline{G}.$

The class of cographs can be constructed from a single vertex by joining another cograph and by taking complements which is equivalent to say they are closed under the operations of join and disjoin union.
This characterization allows us to represent a cograph by an unique tree, called a {\em cotree} \cite{Stewart}. 

 A cotree $T_G$ of a cograph $G$ is a rooted tree in which any interior vertex $w_i$ is either of $\cup$-type (corresponds to union) or $\otimes$-type (corresponds to join). The leaves are typeless and represent the vertices of the cograph $G.$  An interior vertex is said to be  terminal, if it has no interior vertex as successor. We say that {\em depth} of the cotree is the number of edges of  the longest path from the root to a leaf. To build a cotree for a connected cograph, we simply place a $\otimes$ at the cotree's root, placing $\cup$ on interior vertices with odd depth, and placing $\otimes$ on interior vertices with even depth.

For any interior vertex $w_i \in T_G,$ we denote by $s_i$ and $t_i,$ respectively,  the non negative integers which represent the number of the  immediate successors and  leaves of $w_i.$    
The  Figure \ref{fig2} shows  a cograph $G$  and its cotree $T_G$  with depth equals  to 3.

\begin{figure}[h!]
       \begin{minipage}[c]{0.3 \linewidth}
\begin{tikzpicture}
  [scale=0.5,auto=left,every node/.style={circle}]
  \node[draw,circle,fill=black,label=left:$v_1$] (1) at (1,6) {};  
\node[draw,circle,fill=black,label=left:$v_2$] (2) at (-2,4) {};  
\node[draw,circle,fill=black,label=left:$v_3$] (3) at (-2,2) {};  
\node[draw,circle,fill=black,label=below:$v_4$] (4) at (-0.5,0) {};  
\node[draw,circle,fill=black,label=below:$v_5$] (5) at (2.5,0) {};  
\node[draw,circle,fill=black,label=right:$v_6$] (6) at (4,2) {};  
\node[draw,circle,fill=black,label=right:$v_7$] (7) at (4,4) {};  

  \path (1) edge node[left]{} (2)
        (3) edge node[below]{} (4)
        (5) edge node[below]{} (6)
        (5) edge node[below]{} (1)
        (5) edge node[below]{} (2)
        (5) edge node[below]{} (3)
        (5) edge node[below]{} (4)
        (6) edge node[below]{} (1)
        (6) edge node[below]{} (2)
        (6) edge node[below]{} (3)
        (6) edge node[below]{} (4)
        (7) edge node[below]{} (1)
        (7) edge node[below]{} (2)
        (7) edge node[below]{} (3)
        (7) edge node[below]{} (4);
\end{tikzpicture}
       \end{minipage}\hfill
       \begin{minipage}[l]{0.4 \linewidth}
\begin{tikzpicture}
 [scale=1,auto=left,every node/.style={circle,scale=0.9}]

  \node[draw,circle,fill=black,label=below:$v_6$] (o) at (3,4) {};
  \node[draw,circle,fill=black,label=below:$v_4$] (n) at (1.8,4) {};
   \node[draw,circle,fill=black,label=below:$v_5$] (q) at (2.2,4) {};

  \node[draw, circle, fill=blue!10, inner sep=0] (m) at (2.5,5) {$\otimes$};
  \node[draw,circle,fill=black,label=below:$v_7$] (l) at (4,5) {};

  \node[draw, circle, fill=blue!10, inner sep=0] (j) at (3,6) {$\cup$};
  \node[draw,circle,fill=blue!10, inner sep=0] (h) at (2,7) {$\otimes$};
  \node[draw, circle, fill=blue!10, inner sep=0] (g) at (1,6) {$\cup$};
  \node[draw,circle,fill=blue!10, inner sep=0] (f) at  (1.3,5) {$\otimes$};
  \node[draw,circle,fill=blue!10, inner sep=0] (a) at (0,5) {$\otimes$};
  \node[draw, circle, fill=black, label=below:$v_1$] (b) at (-0.3,4) {};
  \node[draw,circle,fill=black, label=below:$v_2$] (c) at (0.5,4) {};
  \node[draw,circle,fill=black,label=below:$v_3$] (e) at (1,4) {};

  \path (a) edge node[left]{} (b)
        (a) edge node[below]{} (c)

   (f) edge node[below]{} (e)
  (f) edge node[below]{} (n)


        (f) edge node[right]{}(g)
        (g) edge node[left]{}(a)
        (h) edge node[right]{}(j)
        (h) edge node[left]{}(g)

        (j) edge node[right]{}(l)
        (j) edge node[below]{}(m)
        (m) edge node[right]{}(o)
        (m) edge node[left]{} (q);
\end{tikzpicture}
       \end{minipage}
       \caption{A cograph $G=((v_{1}\otimes v_{2})\cup (v_{3}\otimes v_{4}))\otimes ((v_{5}\otimes v_{6}) \cup v_7))$ and its cotree $T_G$.}
       \label{fig2}
\end{figure}

 We note for the complementary cograph $\overline{G}$ its cotree $T_{\overline{G}}$ can be obtained from $T_G$ by changing the type of each respective interior vertex
 and keeping their leaves.  The coduplicate (respect. duplicate) vertices, e.g. with the same neighbors and adjacent (respect. not adjacent) have a common parent of  $ \otimes$-type (respect.  $\cup$-type). Moreover, two vertices $v_i, v_j \in G$ are adjacent if and only
 if their least common ancestor in $T_G$ is $\otimes$-type. We define $lca(w_i, w_j) = \otimes,$ where $w_i$ and $w_j$ are the respective parents of $v_i$ and $v_j$ in $T_G.$ 
From this observation, it is easy to obtain the degree of every vertex of $G,$ just by inspecting its cotree $T_G,$
according to \cite{Jones}.

\begin{Prop}
Let $G$ be a cograph with cotree $T_G$ having $r$ interior vertices $\{w_1,\ldots, w_r\}.$ For a fixed $j \in \{1, \ldots, r\}$ having $t_j \geq 1$ leaves,
the degree $\delta(v_j)$ of vertices $v_j $ that represent the leaves $t_j$  of  $w_j$ is  
$$\delta(v_j)= \left\{
\begin{array}{lr}
(t_j -1) + \sum_i t_i, & \mbox{if  $w_j = \otimes,$ sum being over $i's$ with $lca(w_i, w_j)= \otimes$,} \\
 \sum_i t_i , & \mbox{if $w_j= \cup,$ sum being over $i's$ with $lca(w_i, w_j)=\otimes.$ }
\end{array} \right.$$  

\end{Prop}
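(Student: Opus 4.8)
The plan is to compute $\delta(v_j)$ directly from the adjacency criterion recorded just above the statement: two vertices of $G$ are adjacent if and only if their least common ancestor in $T_G$ is of $\otimes$-type. Since every vertex of $G$ is a leaf of $T_G$ with a unique parent among $w_1,\ldots,w_r$, the set $V(G)\setminus\{v_j\}$ splits into disjoint blocks: the $t_j-1$ leaves other than $v_j$ that share the parent $w_j$, and, for each $i\neq j$, the $t_i$ leaves of $w_i$. It therefore suffices to count the neighbors of $v_j$ inside each block and add the results.

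The key observation is that, for a leaf $v_k$ whose parent is $w_i$ with $i\neq j$, the least common ancestor of $v_j$ and $v_k$ coincides with the least common ancestor of the interior vertices $w_i$ and $w_j$; this is exactly the vertex whose type is written $lca(w_i,w_j)$. I would verify this including the case in which one of $w_i,w_j$ is an ancestor of the other, where the least common ancestor is that ancestor itself, so that the claimed identification still holds. Consequently $v_j$ is adjacent to $v_k$ precisely when $lca(w_i,w_j)=\otimes$, and this condition does not depend on which leaf of $w_i$ we chose. Hence each block indexed by $i$ contributes either all of its $t_i$ vertices (when $lca(w_i,w_j)=\otimes$) or none, giving the term $\sum_i t_i$ summed over those $i\neq j$ with $lca(w_i,w_j)=\otimes$.

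It remains to handle the block of leaves sharing the parent $w_j$, whose least common ancestor with $v_j$ is $w_j$ itself. If $w_j$ is of $\otimes$-type, all $t_j-1$ of these siblings are neighbors of $v_j$, contributing the summand $t_j-1$; if $w_j$ is of $\cup$-type, none of them is adjacent to $v_j$, contributing $0$. Adding this local contribution to the cross-block sum of the previous paragraph yields exactly the two cases of the displayed formula.

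The main obstacle is the bookkeeping in the middle step: one must be careful that the interior vertices $w_i$ lying on the path from $w_j$ up to the root are accounted for correctly by $lca(w_i,w_j)$ rather than being treated as a separate family, and that the sum in the $\otimes$-case is understood to run over $i\neq j$ (so that the siblings are not double-counted with the explicit $t_j-1$ term). Once the identification of least common ancestors is settled, the remainder is a routine partition-and-count argument.
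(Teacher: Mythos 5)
Your argument is correct and is exactly the route the paper intends: the paper states this proposition without proof (attributing it to Jones et al.) and remarks only that it follows from the criterion that two vertices are adjacent if and only if their least common ancestor in $T_G$ is of $\otimes$-type, which is precisely the partition-by-parent-and-count argument you carry out, including the needed check that for leaves with distinct parents the leaves' lca coincides with the parents' lca even when one parent is an ancestor of the other. Your explicit observation that the sum in the $\otimes$-case must be taken over $i \neq j$ (so the siblings already counted by the $t_j-1$ term are not double-counted) is a worthwhile clarification, since the paper's later convention $lca(w_j,w_j)=w_j$ would otherwise make the displayed formula ambiguous.
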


\subsection{Diagonalization Algorithm}

An important tool presented in \cite{tura22} was an algorithm
for constructing a {\em diagonal} matrix congruent to $L(G) + \myvar I_n$,
where $L(G)$ is the Laplacian matrix of a cograph $G,$
and $\myvar$ is an arbitrary scalar, using $O(n)$ time and space.

The algorithm's input  is the cotree $T_G$ and $x$. Each leaf $v_i$, $i =1,\ldots,n$ have a value $d_i$ that represents the diagonal element of  $L(G)+xI_n$. It initializes  all entries  with $\delta(v_i) +x,$ where $\delta(v_i)$ denotes the degree of vertex $v_i.$
At each iteration, a pair $\{v_k, v_l\} $ of duplicate or coduplicate vertices with maximum depth  is selected. Then they are processed, that is,  assignments are given to $d_k$ and $d_l$, such that either one or both rows (columns) are diagonalized. When the $k$th row (column)  corresponding to vertex  $v_k$  has been diagonalized then $v_k$ is  removed from the $T_G$, it means that $d_k$ has a permanent final value. Then the algorithm moves to the cotree $T_G -v_k$. The algorithm is shown in Figure \ref{algo}.

\begin{figure}[h]
{\tt
\begin{tabbing}
aaa\=aaa\=aaa\=aaa\=aaa\=aaa\=aaa\=aaa\= \kill
     \> INPUT:  cotree $T_G$, scalar $\myvar$\\
     \> OUTPUT: diagonal matrix $D=[d_1, d_2, \ldots, d_n]$ congruent to $L(G) + \myvar I_n$\\
     \>\\
     \>   $\mbox{ Algorithm}$ Diagonal $(T_{G}, x)$ \\
     \> \> initialize $d_i := \delta(v_i)+ \myvar$, for $ 1 \leq i \leq n$ \\
     \> \> {\bf while } $T_G$  has $\geq 2$    leaves      \\
     \> \> \>  select a pair $(v_k, v_l)$  (co)duplicate of maximum depth with  parent $w$\\
     \> \> \>     $\alpha \leftarrow  d_k$    $\beta \leftarrow d_{l}$\\
     \> \> \> {\bf if} $ w=\otimes$\\
     \> \> \> \>  {\bf if} $\alpha + \beta \neq -2$  \verb+                //subcase 1a+    \\
     \> \> \> \> \>   $d_{l} \leftarrow \frac{\alpha \beta -1}{\alpha + \beta +2};$ \hspace*{0,25cm} $d_{k} \leftarrow \alpha + \beta +2; $\hspace{0,25cm}   $T_G = T_G - v_k$ \\
     \> \> \> \>  {\bf else if } $\beta=-1$ \verb+                //subcase 1b+   \\
     \> \> \> \> \>   $d_{l} \leftarrow -1$ \hspace*{0,25cm}   $d_k  \leftarrow 0;$ \hspace{0,25cm} $T_G = T_G - v_k$ \\
     \> \> \> \>  {\bf else  }  \verb+                      //subcase 1c+   \\
     \> \> \> \> \>   $d_{l} \leftarrow -1$  \hspace*{0,25cm} $d_k \leftarrow (1+\beta)^2;$ \hspace{0,25cm} $T_G= T_G -v_k;$ \hspace{0,25cm} $T_G = T_G -v_l$  \\
     \> \> \>     {\bf else if} $w=\cup$\\
     \> \> \> \>  {\bf if} $\alpha + \beta \neq 0$  \verb+               //subcase 2a+    \\
     \> \> \> \> \>   $d_{l} \leftarrow \frac{\alpha \beta}{\alpha +\beta};$ \hspace*{0,25cm}   $d_k \leftarrow \alpha +\beta;$ \hspace{0,25cm} $T_G = T_G - v_k$ \\
     \> \> \> \>  {\bf else if } $\beta=0$ \verb+                //subcase 2b+   \\
     \> \> \> \> \>   $d_{l} \leftarrow 0;$ \hspace*{0,25cm}  $d_k  \leftarrow 0;$ \hspace{0,25cm} $T_G = T_G - v_k$ \\
     \> \> \> \>  {\bf else  }  \verb+                      //subcase 2c+   \\
     \> \> \> \> \>   $d_{l} \leftarrow \beta;$  \hspace*{0,25cm} $v_k \leftarrow -\beta;$ \hspace{0,25cm} $T_G =T_G - v_k;$ \hspace{0,25cm} $T_G = T_G - v_l$  \\
     \> \>  {\bf end loop}\\
\end{tabbing}
}
\caption{\label{algo} Diagonalization algorithm}
\end{figure}

The next  result from \cite{tura22} will be used throughout the paper.

\begin{Thr}
\label{main1}
For a cograph $G$ of order $n$ with cotree $T_G,$ let  $D=[d_1,d_2,\ldots,d_n]$ be the values   produced  by the Diagonalization Algorithm $(T_G,-x).$ Then the diagonal matrix $D$ is congruente to $L(G)+\myvar I_n,$ hence the number of ( positive $|$ negative $|$ zero) entries in $D$ is equal to the number of Laplacian eigenvalues of $L(G)$ that are ( greater than $\myvar$ $|$ smaller than $\myvar$ $|$ equal to $\myvar$).
\end{Thr}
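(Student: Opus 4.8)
The plan is to reduce everything to \emph{Sylvester's law of inertia}: two congruent real symmetric matrices share the same number of positive, negative and zero eigenvalues. Granting this, it suffices to prove the congruence assertion, namely that the diagonal $D$ returned by the algorithm satisfies $P^{T}(L(G)+\myvar I_n)P=D$ for some invertible $P$; the counting statement then follows at once, since a diagonal entry of $D$ is positive, negative or zero exactly when the corresponding eigenvalue of $L(G)+\myvar I_n$ is, and an eigenvalue $\mu$ of $L(G)$ contributes a positive entry precisely when $\mu$ lies on the appropriate side of the threshold determined by the input scalar. So the whole content is the congruence, and I would prove it by induction on the number of leaves of $T_G$, showing that each pass through the while loop is an elementary congruence followed (in the non-degenerate subcases) by deletion of a decoupled coordinate.

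First I would record the structural fact that drives each step. If $v_k,v_l$ are a (co)duplicate pair of \emph{maximum depth}, then in the current symmetric matrix $M$ their rows and columns coincide outside the $2\times 2$ block indexed by $\{k,l\}$: writing $c$ for this common off-block row, the $\{k,l\}$-block equals $\left(\begin{smallmatrix}\alpha & 0\\ 0&\beta\end{smallmatrix}\right)$ when the common parent $w$ is of $\cup$-type (duplicates, non-adjacent) and $\left(\begin{smallmatrix}\alpha & -1\\ -1&\beta\end{smallmatrix}\right)$ when $w$ is of $\otimes$-type (coduplicates, adjacent), where $\alpha=d_k$ and $\beta=d_l$ are the current diagonal values. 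For the initial matrix $L(G)+\myvar I_n$ this is immediate from $L_{ki}=-a_{ki}=-a_{li}=L_{li}$ together with $L_{kl}\in\{0,-1\}$; the real work is to show this shape is \emph{preserved} by the algorithm, which is the heart of the induction and the main obstacle discussed below.

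Next I would exhibit, for each subcase, the explicit congruence that realizes one iteration. It acts only on coordinates $k$ and $l$ and is the identity elsewhere; concretely I change variables to $p=x_k+x_l$, the direction that carries all coupling to the remaining vertices (since both rows couple through the same $c$), together with a complementary direction $q$ chosen to annihilate the $p$--$q$ cross term. In the generic subcases this is $S=\left(\begin{smallmatrix} a & 1\\ 1-a & -1\end{smallmatrix}\right)$ with $a=\beta/(\alpha+\beta)$ (resp.\ the $\otimes$-analogue), and a direct computation gives $S^{T}\left(\begin{smallmatrix}\alpha&0\\0&\beta\end{smallmatrix}\right)S=\mathrm{diag}\!\left(\frac{\alpha\beta}{\alpha+\beta},\,\alpha+\beta\right)$ in \textbf{subcase~2a} and $S^{T}\left(\begin{smallmatrix}\alpha&-1\\-1&\beta\end{smallmatrix}\right)S=\mathrm{diag}\!\left(\frac{\alpha\beta-1}{\alpha+\beta+2},\,\alpha+\beta+2\right)$ in \textbf{subcase~1a}, matching the algorithm's assignments exactly. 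Crucially $q$ no longer couples to $c$, so the coordinate carrying the value $\alpha+\beta$ (resp.\ $\alpha+\beta+2$) decouples completely and becomes the permanent entry $d_k$, while $p$ inherits the coupling $c$ of a \emph{single} vertex and becomes the merged leaf $v_l$ of $T_G-v_k$. The degenerate subcases (\textbf{1b}, \textbf{1c}, \textbf{2b}, \textbf{2c}) are exactly those where this pivot vanishes, i.e.\ $\alpha+\beta+2=0$ or $\alpha+\beta=0$; there I would instead use a hyperbolic change of variables diagonalizing the rank-one or indefinite block directly, checking that it reproduces the stated values (e.g.\ $\det\left(\begin{smallmatrix}\alpha&-1\\-1&\beta\end{smallmatrix}\right)=\alpha\beta-1=-(1+\beta)^2=(-1)\cdot(1+\beta)^2$ in \textbf{1c}) and that both $v_k$ and $v_l$ then decouple and are removed together.

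Composing the elementary congruences over the whole run yields a single congruence $P^{T}(L(G)+\myvar I_n)P=D$, and Sylvester's law finishes the count. The step I expect to be the genuine obstacle is the preservation claim of the second paragraph: after deleting $v_k$ and overwriting $d_l$, one must verify that the merged coordinate still coincides, off its $2\times2$ block, with every later (co)duplicate partner it is paired with — equivalently that $v_l$ behaves in $T_G-v_k$ exactly like a single vertex of $G-v_k$ with the same neighbourhood. This is where maximality of the depth is essential: it forces the chosen parent $w$ to be terminal, so that removing one leaf of $w$ leaves the remaining leaves genuinely (co)duplicate and keeps the common off-block row $c$ intact for the rest of the computation.
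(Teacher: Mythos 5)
The paper contains no proof of this theorem to compare against: it is quoted as a background result imported from the reference \cite{tura22}, so the only question is whether your reconstruction is sound. It is, and it is essentially the argument used in that source and in the Jacobs--Trevisan diagonalization literature it descends from: reduce everything to Sylvester's law of inertia, observe that a (co)duplicate pair of leaves indexes two rows that agree outside their $2\times 2$ block with off-diagonal entry $0$ (resp.\ $-1$), and realize each pass of the loop as an elementary congruence that permanently decouples one coordinate (two in the degenerate subcases). Your explicit $2\times 2$ computations reproduce the algorithm's assignments, including the hyperbolic pivot in subcases 1c and 2c, where the block $\left(\begin{smallmatrix}0 & -(1+\beta)\\ -(1+\beta) & \beta\end{smallmatrix}\right)$ has determinant $-(1+\beta)^2$ and inertia $(1,1)$, matching the pair of outputs $-1$ and $(1+\beta)^2$.

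Two refinements. First, on the step you flag as the main obstacle: the invariant that actually propagates is that the surviving row's off-block entries are never modified (row $k$ is already zero off the block before it is ever added to row $l$), so after deletion the reduced matrix is exactly $D'-A(G')$ for the smaller cograph $G'=G-v_k$ with cotree $T_G-v_k$; the (co)duplication of later pairs is then a property of $G'$ itself rather than something to be tracked through the history of the computation. Maximum depth is what guarantees that a sibling leaf pair exists and that the selected parent is terminal; preservation of the common off-block row is automatic from the form of the congruences. Second, a sign slip inherited from the statement: calling the algorithm with $-\myvar$ initializes $d_i=\delta(v_i)-\myvar$, so $D$ is congruent to $L(G)-\myvar I_n$, not $L(G)+\myvar I_n$; it is this choice that makes positive entries count eigenvalues greater than $\myvar$ (consistent with the paper's worked example with $\myvar=7$). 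Your phrase ``the appropriate side of the threshold determined by the input scalar'' papers over this, and in a final write-up the congruence should be stated with the correct sign.
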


\begin{Ex} We will apply  Diagonalization to $G$ with $x=-7,$ having cotree $T_G$ represented in Figure \ref{fig3a}.
In our figures, diagonal values $d_i$ appear under the vertex $v_i$ in the cotree while vertices selected appear in red. After initialization, we have $T_G$ with values $d_i= \delta(v_i)+x$, represented in Figure \ref{fig3a}.

\begin{figure}[h!]
\begin{tikzpicture}
   [scale=1,auto=left,every node/.style={circle,scale=0.9}]
  \node[draw,circle,fill=black,label=below:$0$] (o) at (2,5) {};
  
  \node[draw,circle,fill=black, label=below:$-2$] (n) at (1.5,5) {};
   \node[draw,circle,fill=black, label=below:$-2$] (n1) at (1,5) {};
  \node[draw,circle,fill=black, label=below:$-2$] (e) at (0.5,5) {};

  \node[draw,circle,fill=black, label=below:$0$] (l) at (3,5) {};
 \node[draw,circle,fill=black, label=below:$0$] (v) at (2.5,5) {};
 \node[draw,circle,fill=black, label=below:$0$] (v1) at (3.5,5) {};
 \node[draw,circle,fill=black, label=below:$0$] (v2) at (4,5) {};
  \node[draw, circle, fill=blue!10, inner sep=0, label=right:$$] (j) at (3,6) {$\cup$};
  \node[draw,circle,fill=blue!10, inner sep=0, label=left:$$] (h) at (2,7) {$\otimes$};

  \node[draw, circle, fill=blue!10, inner sep=0, label=left:$$] (g) at (1,6) {$\cup$};
    
  \node[draw,circle,fill=blue!10, inner sep=0,label=left:$$] (a) at (0,5) {$\otimes$};
  
  \node[draw, circle, fill=blue!10, inner sep=0, label=right:$$] (g1) at (0.25,4) {$\cup$};
  \node[draw, circle, fill=black, label=below:$0$] (b) at (0.25,3) {};
  \node[draw,circle,fill=black, label=below:$0$] (c) at (0.75,3) {};
  
   \node[draw, circle, fill=blue!10, inner sep=0, label=left:$$] (g2) at (-0.5,4) {$\cup$};
 \node[draw,circle,fill=black,label=below:$0$] (e1) at (-1,3) {};
  \node[draw,circle,fill=black,label=below:$0$] (e2) at (-0.25,3) {};

  \path 
        (a) edge node[below]{} (g2)
(e1) edge node[below]{} (g2)
(e2) edge node[below]{} (g2)

(a) edge node[left]{} (g1)
(g1) edge node[left]{} (b)
(g1) edge node[left]{} (c)

  (g) edge node[below]{} (e)
  (g) edge node[below]{} (n)
 (g) edge node[below]{} (n1)

    (j) edge node[below]{} (v)
    (j) edge node[below]{} (v1)
(j) edge node[below]{} (v2)      
        (g) edge node[left]{}(a)
        (h) edge node[right]{}(j)
        (h) edge node[left]{}(g)

        (j) edge node[right]{}(l)
       (j) edge node[below]{}(o);
\end{tikzpicture}
       \caption{ Initial  $T_G$ with $d_i= \delta(v_i) +x.$}
       \label{fig3a}
\end{figure}

In the first iteration of the algorithm, duplicate vertices with maximum depth  are chosen, represented in  Figure \ref{fig4a}.

\begin{figure}[h!]
       \begin{minipage}[c]{0.45 \linewidth}
\begin{tikzpicture}
   [scale=1,auto=left,every node/.style={circle,scale=0.9}]
  \node[draw,circle,fill=black,label=below:$-2$] (o) at (2,5) {};
  
  \node[draw,circle,fill=black, label=below:$-2$] (n) at (1.5,5) {};
   \node[draw,circle,fill=black, label=below:$-2$] (n1) at (1,5) {};

  \node[draw,circle,fill=black, label=below:$0$] (l) at (3,5) {};
 \node[draw,circle,fill=black, label=below:$0$] (v) at (2.5,5) {};
 \node[draw,circle,fill=black, label=below:$0$] (v1) at (3.5,5) {};
 \node[draw,circle,fill=black, label=below:$0$] (v2) at (4,5) {};
  \node[draw, circle, fill=blue!10, inner sep=0, label=right:$$] (j) at (3,6) {$\cup$};
  \node[draw,circle,fill=blue!10, inner sep=0, label=left:$$] (h) at (2,7) {$\otimes$};

  \node[draw, circle, fill=blue!10, inner sep=0, label=left:$$] (g) at (1,6) {$\cup$};
    
  \node[draw,circle,fill=blue!10, inner sep=0, label=left:$$] (a) at (0,5) {$\otimes$};
  
  \node[draw, circle, fill=blue!10, inner sep=0, label=right:$$] (g1) at (0.25,4) {$\cup$};
  \node[draw, circle, fill=red, label=below:$0$] (b) at (0.25,3) {};
  \node[draw,circle,fill=red, label=below:$0$] (c) at (0.75,3) {};
  \node[draw,circle,fill=black, label=below:$-2$] (e) at (0.5,5) {};
  
   \node[draw, circle, fill=blue!10, inner sep=0,label=left:$$] (g2) at (-0.5,4) {$\cup$};
 \node[draw,circle,fill=red,label=below:$0$] (e1) at (-1,3) {};
  \node[draw,circle,fill=red,label=below:$0$] (e2) at (-0.25,3) {};

  \path 
        (a) edge node[below]{} (g2)
(e1) edge node[below]{} (g2)
(e2) edge node[below]{} (g2)

(a) edge node[left]{} (g1)
(g1) edge node[left]{} (b)
(g1) edge node[left]{} (c)

  (g) edge node[below]{} (e)
  (g) edge node[below]{} (n)
 (g) edge node[below]{} (n1)

    (j) edge node[below]{} (v)
    (j) edge node[below]{} (v1)
(j) edge node[below]{} (v2)      
        (g) edge node[left]{}(a)
        (h) edge node[right]{}(j)
        (h) edge node[left]{}(g)

        (j) edge node[right]{}(l)
       (j) edge node[below]{}(o);
\end{tikzpicture}
\caption{ First iteration}
       \label{fig4a}
       \end{minipage}\hfill
       \begin{minipage}[c]{0.4 \linewidth}
\begin{tikzpicture}
   [scale=1,auto=left,every node/.style={circle,scale=0.9}]

   [scale=1,auto=left,every node/.style={circle,scale=0.9}]
  \node[draw,circle,fill=black,label=below:$0$] (o) at (2,5) {};
  
  \node[draw,circle,fill=black, label=below:$-2$] (n) at (1.5,5) {};
   \node[draw,circle,fill=black, label=below:$-2$] (n1) at (1,5) {};

  \node[draw,circle,fill=black, label=below:$0$] (l) at (3,5) {};
 \node[draw,circle,fill=black, label=below:$0$] (v) at (2.5,5) {};
 \node[draw,circle,fill=black, label=below:$0$] (v1) at (3.5,5) {};
 \node[draw,circle,fill=black, label=below:$0$] (v2) at (4,5) {};
  \node[draw, circle, fill=blue!10, inner sep=0,  label=right:$$] (j) at (3,6) {$\cup$};
  \node[draw,circle,fill=blue!10, inner sep=0,  label=left:$$] (h) at (2,7) {$\otimes$};

  \node[draw, circle, fill=blue!10, inner sep=0, label=left:$$] (g) at (1,6) {$\cup$};
    
  \node[draw,circle,fill=blue!10, inner sep=0,label=left:$$] (a) at (0,5) {$\otimes$};
  
  \node[draw, circle, fill=red,label=below:$0$] (g1) at (0.25,4) {};
  \node[draw, circle, fill=black, label=below:$0$] (b) at (0.25,3) {};
  \node[draw,circle,fill=black, label=below:$-2$] (e) at (0.5,5) {};
  
   \node[draw, circle, fill=red,label=below:$0$] (g2) at (-0.5,4) {};
 \node[draw,circle,fill=black,label=below:$0$] (e1) at (-1,3) {};

  \path 
        (a) edge node[below]{} (g2)

(a) edge node[left]{} (g1)

  (g) edge node[below]{} (e)
  (g) edge node[below]{} (n)
 (g) edge node[below]{} (n1)

    (j) edge node[below]{} (v)
    (j) edge node[below]{} (v1)
(j) edge node[below]{} (v2)      
        (g) edge node[left]{}(a)
        (h) edge node[right]{}(j)
        (h) edge node[left]{}(g)

        (j) edge node[right]{}(l)
       (j) edge node[below]{}(o);\end{tikzpicture}
       \caption{After \subiib~applied}
       \label{fig5a}
        \end{minipage}
\end{figure}

Since $\beta=0$ the \subiib~is executed, it means, two vertices are removed from cotree with a permanent null value 
and two remaining vertices with value $0$ are relocated under its parent's, represented in Figure \ref{fig5a}.

In the next step, coduplicate vertices with maximum depth are selected, represented in Figure \ref{fig5a}.
Since $\alpha +\beta = 0+0 \neq -2,$  the \subia~is executed. Then one vertex is removed from cotree with a value $2$
while another is relocated under its parent's with value $-\frac{1}{2},$ represented in Figure \ref{fig6a}.

\begin{figure}[h!]
       \begin{minipage}[c]{0.45 \linewidth}

\begin{tikzpicture}
[scale=1,auto=left,every node/.style={circle,scale=0.9}]

  \node[draw,circle,fill=black,label=below:$0$] (o) at (2,5) {};
  
  \node[draw,circle,fill=red, label=below:$-2$] (n) at (1.5,5) {};
   \node[draw,circle,fill=red, label=below:$-2$] (n1) at (1,5) {};

  \node[draw,circle,fill=black, label=below:$0$] (l) at (3,5) {};
 \node[draw,circle,fill=black, label=below:$0$] (v) at (2.5,5) {};
 \node[draw,circle,fill=black, label=below:$0$] (v1) at (3.5,5) {};
 \node[draw,circle,fill=black, label=below:$0$] (v2) at (4,5) {};
  \node[draw, circle, fill=blue!10, inner sep=0,  label=right:$$] (j) at (3,6) {$\cup$};
  \node[draw,circle,fill=blue!10, inner sep=0,  label=left:$$] (h) at (2,7) {$\otimes$};

  \node[draw, circle, fill=blue!10, inner sep=0,label=left:$$] (g) at (1,6) {$\cup$};
    
  \node[draw,circle,fill=red,label=below:$-\frac{1}{2}$] (a) at (0,5) {};
  
  \node[draw, circle, fill=black, label=below:$0$] (b) at (0,3) {};
  \node[draw,circle,fill=red, label=below:$-2$] (e) at (0.5,5) {};
  
   \node[draw, circle, fill=black,label=below:$2$] (g2) at (1,3) {};
 \node[draw,circle,fill=black,label=below:$0$] (e1) at (-1,3) {};

  \path 


  (g) edge node[below]{} (e)
  (g) edge node[below]{} (n)
 (g) edge node[below]{} (n1)

    (j) edge node[below]{} (v)
    (j) edge node[below]{} (v1)
(j) edge node[below]{} (v2)      
        (g) edge node[left]{}(a)
        (h) edge node[right]{}(j)
        (h) edge node[left]{}(g)

        (j) edge node[right]{}(l)
       (j) edge node[below]{}(o);\end{tikzpicture}
\caption{ After \subia~applied}
       \label{fig6a}
       \end{minipage}\hfill
       \begin{minipage}[c]{0.4 \linewidth}\begin{tikzpicture}
 [scale=1,auto=left,every node/.style={circle,scale=0.9}]

  \node[draw,circle,fill=red,label=below:$0$] (o) at (2,5) {};
  
  \node[draw,circle,fill=black, label=below:$-4$] (n) at (1.25,3) {};
   \node[draw,circle,fill=black, label=below:$-3$] (n1) at (2,3) {};

  \node[draw,circle,fill=red, label=below:$0$] (l) at (3,5) {};
 \node[draw,circle,fill=red, label=below:$0$] (v) at (2.5,5) {};
 \node[draw,circle,fill=red, label=below:$0$] (v1) at (3.5,5) {};
 \node[draw,circle,fill=red, label=below:$0$] (v2) at (4,5) {};
  \node[draw, circle, fill=blue!10, inner sep=0,label=right:$$] (j) at (3,6) {$\cup$};
  \node[draw,circle,fill=blue!10, inner sep=0,  label=left:$$] (h) at (2,7) {$\otimes$};

  \node[draw, circle, fill=black,label=left:$-\frac{2}{7}$] (g) at (1,6) {};
    
  \node[draw,circle,fill=black,label=below:$-\frac{7}{6}$] (a) at (2.75,3) {};
  
  \node[draw, circle, fill=black, label=below:$0$] (b) at (-0.25,3) {};
  
   \node[draw, circle, fill=black,label=below:$2$] (g2) at (0.5,3) {};
 \node[draw,circle,fill=black,label=below:$0$] (e1) at (-1,3) {};

  \path 



    (j) edge node[below]{} (v)
    (j) edge node[below]{} (v1)
(j) edge node[below]{} (v2)      
        (h) edge node[right]{}(j)
        (h) edge node[left]{}(g)

        (j) edge node[right]{}(l)
       (j) edge node[below]{}(o);\end{tikzpicture}
       \caption{After \subiia~applied}
       \label{fig7a}
        \end{minipage}
\end{figure}

For the duplicate vertices on the left of cotree, we apply successive times the \subiia. 
The removed vertices that have been processed from right to left have assignments equal to $-4, -3$ and $-\frac{7}{6},$ while the vertex relocated under its parent's has assignment $-\frac{2}{7}.$
represented in Figure \ref{fig7a}.

\begin{figure}[h!]
\begin{tikzpicture}
 [scale=1,auto=left,every node/.style={circle,scale=0.9}]

  
  \node[draw,circle,fill=black, label=below:$-4$] (n) at (1.25,4) {};
   \node[draw,circle,fill=black, label=below:$-3$] (n1) at (2,4) {};

  \node[draw,circle,fill=black, label=below:$0$] (l) at (5.75,4) {};
 \node[draw,circle,fill=black, label=below:$0$] (v) at (5,4) {};
 \node[draw,circle,fill=black, label=below:$0$] (v1) at (3.5,4) {};
 \node[draw,circle,fill=black, label=below:$0$] (v2) at (4.25,4) {};
  \node[draw, circle, fill=red,label=right:$0$] (j) at (3,6) {};
  \node[draw,circle,fill=blue!10, inner sep=0, label=right:$$] (h) at (2,7) {$\otimes$};

  \node[draw, circle, fill=red,label=left:$-\frac{2}{7}$] (g) at (1,6) {};
    
  \node[draw,circle,fill=black,label=below:$-\frac{7}{6}$] (a) at (2.75,4) {};
  
  \node[draw, circle, fill=black, label=below:$0$] (b) at (-0.25,4) {};
  
   \node[draw, circle, fill=black,label=below:$2$] (g2) at (0.5,4) {};
 \node[draw,circle,fill=black,label=below:$0$] (e1) at (-1,4) {};

  \path 



        (h) edge node[right]{}(j)
        (h) edge node[left]{}(g);


\end{tikzpicture}
       \caption{ Last iteration}
       \label{fig8a}
\end{figure}

For the last duplicate vertices since $\beta=0$ the \subiia~is executed again.
The removed vertices from cotree have a permanent null value while the remaining vertex has value $0$ is relocated under its parent's, represented in Figure \ref{fig8a}.

Finally, for the last iteration, we have coduplicate vertices with assignments $-\frac{2}{7}$ and $0.$ Since $\alpha+\beta\neq -2$
the \subia~is executed and the assignments $-\frac{7}{12}$ and $\frac{12}{7}$ are given. Therefore, the final diagonal is $D=[0,0,2,-4,-3,-\frac{7}{6},0,0,0,0,-\frac{7}{12},\frac{12}{7}].$

According to Theorem \ref{main1}, there are two Laplacian eigenvalues greather than $7,$ four Laplacian eigenvalues  less than $7$
and $7$ is a Laplacian eigenvalue with multiplicity 6. In fact, we have that  $Spect_{L}(G)=\{12,9,7^6,5^3,0\}.$
\end{Ex}


\begin{Lem}
\label{Lem1}
Let $G$ be a  cograph with cotree $T_G.$  Let $t_i \geq 2$ be the leaves of an interior vertex $w_i$ of $T_G.$ Initializing the Diagonalization Algorithm with $x=-\delta(v_i)$ (respect. $x=-\delta(v_i)-1$) for duplicate (respect. coduplicate) vertices  $v_i$ then  $\delta(v_i)$ (respect. $\delta(v_i) +1$)  is a Laplacian eigenvalue  with multiplicity $t_i -1,$ if $w_i=\cup$-type (respect.  $w_i=\otimes$-type). 
\end{Lem}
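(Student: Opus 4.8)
The plan is to track the diagonal entries attached to the $t_i$ leaves of $w_i$ throughout the run of the algorithm and to show that exactly $t_i-1$ of them become permanently frozen at $0$. First I would record that, since these $t_i$ leaves share the parent $w_i$, they are pairwise duplicate when $w_i=\cup$ and pairwise coduplicate when $w_i=\otimes$; either way they have a common open (resp.\ closed) neighborhood and hence a common degree, which is the value $\delta(v_i)$ in the statement. Therefore, after the initialization $d:=\delta(v_i)+x$, all of these leaves carry the \emph{same} entry. With $x=-\delta(v_i)$ in the $\cup$-case this entry is $0$, and with $x=-\delta(v_i)-1$ in the $\otimes$-case it is $-1$.

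Next I would analyse what happens when the loop processes a pair drawn from this group. Because all these leaves sit at one depth and are mutually (co)duplicate with identical entries, and because Theorem \ref{main1} asserts that the output $D$ is congruent to $L(G)+xI_n$, Sylvester's law of inertia makes the number of zero entries of $D$ independent of the order in which pairs are selected; I am thus free to process this group before the rest of the tree. In the $\cup$-case a selected pair $(v_k,v_l)$ has $\alpha=\beta=0$, so $\alpha+\beta=0$ and $\beta=0$, which is precisely \subiib: it freezes $d_k=0$, deletes $v_k$, and returns the survivor $v_l$ with $d_l=0$. In the $\otimes$-case a selected pair has $\alpha=\beta=-1$, so $\alpha+\beta=-2$ and $\beta=-1$, which is precisely \subib: it freezes $d_k=0$, deletes $v_k$, and returns the survivor $v_l$ with $d_l=-1$. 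In both cases the surviving leaf keeps exactly the entry it began with, so the invariant that every remaining leaf of $w_i$ still carries the common value $0$ (resp.\ $-1$) is preserved.

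I would then finish by induction on the number of leaves of $w_i$ still present. Each of the first $t_i-1$ selections removes one leaf whose frozen diagonal entry is $0$, and by the invariant above the next selection among the survivors again falls into \subiib\ (resp.\ \subib); after $t_i-1$ steps a single leaf remains and is relocated to the parent of $w_i$. This produces $t_i-1$ zero entries in $D$. By Theorem \ref{main1} each zero entry accounts for one copy of the eigenvalue $-x$, and since $-x=\delta(v_i)$ in the $\cup$-case and $-x=\delta(v_i)+1$ in the $\otimes$-case, we obtain the asserted multiplicity $t_i-1$ for the eigenvalue $\delta(v_i)$, respectively $\delta(v_i)+1$.

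The step I expect to be the main obstacle is the justification for treating the leaves of $w_i$ in isolation, since the algorithm is obliged to pick a pair of maximum depth and these leaves need not lie deepest in $T_G$. I would settle this either by appealing to the order-independence supplied by congruence and inertia, as above, or by arguing directly that whenever the algorithm does reach these leaves their shared entry is still $0$ (resp.\ $-1$); the invariant of the second paragraph is exactly what makes the inductive step insensitive to which admissible pair the algorithm chooses.
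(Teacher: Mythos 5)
Your proposal is correct and follows essentially the same route as the paper: with $x=-\delta(v_i)$ (resp.\ $x=-\delta(v_i)-1$) the leaves of $w_i$ initialize to $0$ (resp.\ $-1$), each selection among them triggers \subiib~(resp.\ \subib) and freezes a permanent zero, and Theorem \ref{main1} converts the $t_i-1$ zeros into the stated multiplicity. The paper's own proof is a terser version of exactly this; your added discussion of the processing order and the survivor-value invariant only makes explicit what the paper leaves implicit (and your invariant argument, rather than the inertia-based reordering, is the right way to close that point).
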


\begin{proof}
Let $t_i \geq 2$ be the leaves with degree $\delta(v_i)$ of an interior vertex  $w_i$ of $T_G.$
Initializing the Diagonalization algorithm with $x=-\delta(v_i)$ (respect.  $x=-\delta(v_i)-1$) for duplicate (respect. coduplicate) vertices  $v_i$
and since $\beta =0$ (respect. $\beta=-1)$
the algorithm enters to the \subiib~(respect. \subib). Therefore, it assigns $t_i -1$ permanent  values zero.
\end{proof}

\begin{Lem}
\label{Lem2}
Let $G$ be a connected cograph with cotree $T_{G}.$ The  Diagonalization $(T_G,x=0)$  assigns a permanent null  value  in the last iteration by \subia.
\end{Lem}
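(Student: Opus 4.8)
The plan is to combine the congruence supplied by Theorem~\ref{main1} with the fact that the Laplacian of a connected graph is positive semidefinite of rank $n-1$. Running Diagonalization$(T_G,0)$ produces a diagonal $D$ congruent to $L(G)$, so by Theorem~\ref{main1} the number of zero entries of $D$ equals the multiplicity of $0$ as a Laplacian eigenvalue, namely $1$ (one component), and the number of negative entries equals the number of eigenvalues below $0$, namely $0$. Thus $D$ has exactly one zero and no negative entry, and the whole task is to locate that unique zero. First I would pin down the last iteration. Since $G$ is connected, the root of $T_G$ is of $\otimes$-type; because the algorithm always selects a (co)duplicate pair of maximum depth, every proper subtree is contracted to a single relocated leaf-value before any child of the root (which sits at depth $1$) is processed. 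Hence when only two leaves remain they are both children of the root, so they are coduplicate and the last iteration executes \casei{} ($w=\otimes$). The only way this could avoid \subia{} is $\alpha+\beta=-2$, which I must exclude.

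Second, I would show that each value $c_i$ to which a child-subtree of the root contracts is strictly positive. Processing such a subtree uses only congruence operations on the rows and columns indexed by its vertex set $H_i$; since all vertices of $H_i$ are joined, at the root, to the same set of $p_i\ge 1$ outside vertices, the principal block on $H_i$ is exactly $L(H_i)+p_iI$, and under the within-$H_i$ operations this block transforms by congruence independently of its coupling to the outside. As $L(H_i)$ is positive semidefinite and $p_i\ge 1$, the block $L(H_i)+p_iI$ is positive definite, so all of its pivots — in particular the surviving value $c_i$ — are positive. (A leaf child gives $c_i=\delta(v_i)>0$ directly.)

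Third, the contraction at the root keeps the current reduced matrix of the form $\mathrm{diag}(\text{values})$ with every off-diagonal entry equal to $-1$: one checks that applying \subia{} to a coduplicate pair $(\alpha,\beta)$ leaves the survivor coupled to the remaining root-children again by $-1$, so the form is preserved. Because all current values exceed $-1$ (true initially by the second step, and preserved since $s+1=\frac{(\alpha+1)(\beta+1)}{(\alpha+1)+(\beta+1)}>0$ whenever $\alpha+1,\beta+1>0$), we always have $\alpha+\beta+2>0$, so \subia{} — and never \subib{} or \subic{} — is executed at the root. This reduced matrix is singular of corank $1$ (it is congruent to what is left of $L(G)$ after removing the positive subtree-pivots); solving its kernel equation gives $\sum_i\frac{1}{c_i+1}=1$. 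The identity $\frac{1}{s+1}=\frac{1}{\alpha+1}+\frac{1}{\beta+1}$ shows this sum is invariant under each \subia{} contraction, so at the last iteration the two remaining values satisfy $\frac{1}{\alpha+1}+\frac{1}{\beta+1}=1$, i.e. $\alpha\beta=1$. Therefore the final survivor $d_l=\frac{\alpha\beta-1}{\alpha+\beta+2}=0$ is the permanent null value, produced in the last iteration by \subia, as claimed.

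I expect the main obstacle to be the second step: making precise that processing a child-subtree is genuinely a congruence of the block $L(H_i)+p_iI$ (so that inertia forces $c_i>0$), together with the bookkeeping that the surviving value retains the $-1$ coupling needed to keep the reduced matrix in its special form. Once positivity of the $c_i$ and the reciprocal identity are established, the conclusion follows immediately.
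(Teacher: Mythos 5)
Your proof is correct, but it takes a genuinely different route from the paper's. The paper argues by contradiction at the level of multiplicities: a permanent zero can only arise from \subib, \subiib, or from \subia{} in the last iteration; if it arose from \subib{} or \subiib{} at an intermediate step inside some subcotree $T_{G_i}$, then attaching a second copy of $T_{G_i}$ would yield a connected cograph $H$ for which Diagonalization $(T_H,0)$ outputs two permanent zeros, contradicting the fact that $0$ has multiplicity one for a connected graph. You instead open up the congruence behind Theorem~\ref{main1}: since the root is of $\otimes$-type, the principal block of $L(G)$ on each child-subtree $H_i$ of the root is $L(H_i)+p_iI$ with $p_i\geq 1$, hence positive definite, so Sylvester's law forces every pivot produced while that subtree contracts --- including the surviving value $c_i$ --- to be strictly positive; at the root level all current values then stay above $-1$, so $\alpha+\beta+2>0$ and only \subia{} can fire there, and your invariant $\sum_i 1/(c_i+1)=1$ identifies the final survivor as the unique zero. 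Your argument leans on internal facts about the algorithm that this paper only imports from \cite{tura22} (that processing a pair is a congruence confined to its two rows and columns, and that coduplicate survivors retain mutual coupling $-1$), and you rightly flag these as the points needing care; granted those, it proves strictly more than the paper's proof does, namely that every permanent value assigned before the last iteration is positive, so no subcase other than \subia{} and \subiia{} ever occurs during Diagonalization $(T_G,0)$ on a connected cograph. It also sidesteps a delicacy in the paper's duplication argument: attaching a copy of $T_{G_i}$ changes the vertex degrees and hence the initialization $d_i=\delta(v_i)$, so it is not immediate there that both copies still generate a permanent zero; your inertia computation avoids that issue entirely.
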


\begin{proof} 
Let $G$ be a  connected cograph with  cotree $T_{G}.$
Let $x=0$ be a Laplacian eigenvalue of $G$ and consider the execution of Diagonalization $(T_G,x=0).$
According Theorem \ref{main1}, we must have a zero on the final diagonal. 
So, the permanent null value is given by either situations:
 \subib,~
 \subiib,~ 
or \subia~in the last iteration. 

We assume that  \subib~or \subiib~are executed in an intermediate step of Diagonalization $(T_G,x=0).$ 
Let's denote by $T_{G_i}$ the subcotree which has assigned this permanent zero. Let $H$ be a connected cograph with a cotree $T_H$ obtained from $T_G$ by attaching a copy of $ T_{G_i}$ and consider the Diagonalization $(T_H,x=0).$ Since we will have two permanent null values, this is a contradiction.  Therefore, follows that permanent  null  value  is assigned by \subia~ in the last iteration. 
\end{proof}


\section{ A Linear Algorithm for obtaining $Spect_{L}(G)$}
\label{Sec3}

In this section, we present an $O(n)$ time and space algorithm for computing the Laplacian eigenvalues of a cograph $G.$ Our approach uses the cotree $T_G$ of  cograph $G$ to perform this procedure.

The algorithm's input is a cotree $T_G$ with  vertices (including the interior vertices) having a weight.
It initializes with  all leaves  having a weight $p_i=1$ and the interior vertices $w_i$ with degree $\delta(w_i)$ (see Definition \ref{def1}). 
At each iteration, a set  $\{v_1, \ldots, v_m\}$ of duplicate or coduplicate vertices with maximum depth  is selected. Then they are processed, that is,  assignments are given to them.  The assignments given to $v_1, \ldots, v_{m-1}$  corresponds to a Laplacian eigenvalue of $G$ while the vertex $v_m$ receives the value $\sum_i p_i.$ Then $v_1, \ldots, v_{m-1}$  are removed from the $T_G$, it means that $v_1, \ldots, v_{m-1}$ have a permanent final value. The vertex $v_m$ is relocated under its parent's and then the algorithm moves to the cotree $T_G -\{v_1, \ldots, v_{m-1}\}.$
This process is repeated until we obtain a cotree with only one leaf.
We consider the following definition.

\begin{Def}
\label{def1}
Let $G$ be a cograph with cotree $T_G$  having $r$ interior vertices $\{w_1, \ldots, w_r\}.$  
We assume that  leaves of an interior vertex $w_i$ have vertices $v_i$  with a weight $p_i.$ For a fixed $j  \in \{1, \ldots, r\},$  the degree of an interior vertex $w_j,$ denoted by $\delta(w_j),$  is given by 
$$\delta(w_j) = \sum_{i} p_i$$
sum being over $i's$ with $lca(w_i, w_j)= \otimes.$
\end{Def}


\begin{Prop}
\label{prop2}
Let $G$ be a cograph of order $n$ with cotree $T_G$ having $r$ interior vertices $\{w_1, \ldots, w_r\}.$ We assume that  leaves $t_i\geq 1$ of an interior vertex $w_i$ have vertices $v_i$ with a weight $p_i=1.$ For any interior vertex $w_j,$ $j\in \{1,\ldots, r\},$ we have
$$\delta(w_j)= \left\{
\begin{array}{lr}
t_j  + \sum_i t_i, & \mbox{if  $w_j = \otimes,$ sum being over $i's$ with $lca(w_i, w_j)= \otimes$,} \\
 \sum_i t_i , & \mbox{if $w_j= \cup,$ sum being over $i's$ with $lca(w_i, w_j)=\otimes.$ }
\end{array} \right.$$  
Furthermore, for $j\in \{1,\ldots, r\},$ holds 
$$ n = \delta(w_j) + \delta(\overline{w_j}),$$
where $\overline{w_j}$ is the respective interior vertex of $w_j$ with opposite type in the cotree $T_{\overline{G}}.$
\end{Prop}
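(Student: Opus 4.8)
The plan is to treat the two assertions separately: the explicit piecewise formula for $\delta(w_j)$ follows almost directly from Definition \ref{def1} by inserting the initial weights, while the complementary identity $n = \delta(w_j) + \delta(\overline{w_j})$ is a counting argument over the $n$ leaves of the cotree. Throughout I will read the sum in Definition \ref{def1} as a sum over leaves, under the convention that a leaf attached directly to $w_j$ is counted according to the type of $w_j$ itself; this is the reading that makes the two descriptions consistent.

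For the first assertion, I would start from $\delta(w_j) = \sum_i p_i$ and substitute $p_i = 1$, so that $\delta(w_j)$ becomes the number of leaves $v$ of $T_G$ whose least common ancestor with $w_j$ is of $\otimes$-type. I would then split this count into the $t_j$ leaves hanging directly below $w_j$ and the leaves hanging below the other interior vertices. The first group contributes $t_j$ exactly when $w_j = \otimes$ (its own leaves are pairwise adjacent) and contributes $0$ when $w_j = \cup$; the second group always contributes $\sum_i t_i$ over those $i$ with $lca(w_i, w_j) = \otimes$. Combining the two cases gives the stated formula. It is worth recording here the one-unit discrepancy with the leaf-degree formula recalled in Section \ref{Sec2}: the interior degree carries $t_j$ where the leaf degree carries $t_j - 1$, reflecting that $\delta(w_j)$ counts all $t_j$ sibling leaves rather than the $t_j - 1$ neighbours of a single fixed leaf.

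For the second assertion, I would invoke the description of $T_{\overline{G}}$ recalled just before that earlier Proposition: $T_{\overline{G}}$ has the same underlying tree and the same leaves as $T_G$, with the type of every interior vertex flipped. Hence $lca_{T_{\overline{G}}}(\overline{w_i}, \overline{w_j}) = \otimes$ precisely when $lca_{T_G}(w_i, w_j) = \cup$, and the self-convention is reversed as well. The key observation is then that $\delta(w_j)$ counts exactly the leaves $v$ with $lca(w(v), w_j) = \otimes$ in $T_G$ (where $w(v)$ denotes the parent of $v$), whereas $\delta(\overline{w_j})$ counts exactly the leaves with $lca(w(v), w_j) = \cup$. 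Since every one of the $n$ leaves satisfies exactly one of these two mutually exclusive conditions, the two degrees partition the leaf set, and summing the unit weights yields $\delta(w_j) + \delta(\overline{w_j}) = n$.

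The step I expect to require the most care is the bookkeeping of the own-leaves of $w_j$ in the second assertion: verifying that the $t_j$ leaves directly below $w_j$ are counted in exactly one of $\delta(w_j)$, $\delta(\overline{w_j})$. These leaves are counted in $\delta(w_j)$ exactly when $w_j = \otimes$, and in $\delta(\overline{w_j})$ exactly when $\overline{w_j} = \otimes$, i.e. when $w_j = \cup$; since $w_j$ has exactly one type, they are counted once and only once, which is what makes the leaf-partition exact. Once this self-contribution is pinned down, the remaining external leaves split cleanly by the $\otimes$/$\cup$ dichotomy of their least common ancestor with $w_j$, and the total of $n$ follows.
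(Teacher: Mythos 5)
Your proposal is correct and follows essentially the same route as the paper: the first part is a direct substitution of $p_i=1$ into Definition~\ref{def1} with the convention $lca(w_j,w_j)=w_j$ handling the $t_j$ own-leaves, and the second part identifies $n-\delta(w_j)$ with the count of leaves whose least common ancestor with $w_j$ is of $\cup$-type, which becomes $\delta(\overline{w_j})$ after the type flip in $T_{\overline{G}}$. Your version is somewhat more explicit about the leaf-partition bookkeeping, but the underlying argument is the same.
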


\begin{proof} 
Let $G$ be a cograph of order $n$ with cotree $T_G$ having $r$ interior vertices $\{w_1, \ldots, w_r\}.$  We assume that leaves $t_i\geq 1$ of an interior vertex $w_i$  have vertices $v_i$ with a weight $p_i=1.$ Let $w_j$ be an interior vertex with $\delta(w_j).$ By definition, we have that
$ \delta(w_j) = \sum_i p_i$ sum being over $i's$ with $lca(w_i, w_j)= \otimes.$ So, if $w_j=\cup$ then  $\delta(w_j)$ is equivalent to sum the $t_i,$  which have with $w_j$ a least common ancestor of $\otimes$-type, since the leaves of cotree have weight $p_i=1.$
Otherwise, if $w_j=\otimes$ and it has $t_j\geq 1$ leaves with a weight $p_j=1$ then $\sum_j t_j$  must be included in $\delta(w_j)$ since the $lca(w_j, w_j)= w_j.$

In relation to the second statement, we first note that $\delta(w_j) \leq n,$ for any interior vertex $w_j \in T_G.$  Let $p\geq 0 $ be an integer such that $n=p+\delta(w_j).$ Since $p$ represents the number of leaves $t_i$ of $w_i \in T_G$ with $lca(w_i, w_j)=\cup,$ including the $t_j$ leaves of $w_j,$ if $w_j =\cup$-type, then  $p = \delta(\overline{w_j})$ where $\overline{w_j}$ is the respective interior vertex of $w_j$ with opposite type in the cotree $T_{\overline{G}}$ and therefore, follows the result.
\end{proof}


\begin{Lem}
\label{Lem3}
Let $G$ be a cograph  of order $n$ with  cotree  $T_{G}.$ We assume that the leaves of $T_G$ have initialized with weight $p_i=1$ and the interior vertices $w_i$
with  degree $\delta(w_i).$  If $w_j$ is an interior vertex with maximum depth  in $T_G$ with $t_j \geq 2 $ leaves having vertices with weight $p_j$
then $\delta(w_j)$ is a Laplacian eigenvalue of $G$ with mutiplicity $t_j -1.$ 
\end{Lem}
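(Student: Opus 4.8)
The plan is to reduce Lemma \ref{Lem3} to the already-established Lemma \ref{Lem1} by translating the interior degree $\delta(w_j)$ into the ordinary vertex degree $\delta(v_j)$ of the leaves hanging from $w_j$. First I would record that, because $w_j$ has maximum depth in $T_G$, it is a terminal interior vertex; hence all of its $t_j$ children are leaves, and since they share the common parent $w_j$ they are pairwise duplicate when $w_j=\cup$ and pairwise coduplicate when $w_j=\otimes$. Since the leaves of $T_G$ are initialized with weight $p_i=1$, we have $p_j=1$, so that Definition \ref{def1} and Proposition \ref{prop2} apply directly to $w_j$.

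Next I would compare the two degree formulas. By Proposition \ref{prop2}, $\delta(w_j)=\sum_i t_i$ if $w_j=\cup$ and $\delta(w_j)=t_j+\sum_i t_i$ if $w_j=\otimes$, both sums ranging over the interior vertices $w_i$ with $lca(w_i,w_j)=\otimes$. On the other hand, the leaf-degree Proposition of Section \ref{Sec2} gives $\delta(v_j)=\sum_i t_i$ when $w_j=\cup$ and $\delta(v_j)=(t_j-1)+\sum_i t_i$ when $w_j=\otimes$, over the identical index set. Subtracting, I obtain the single clean relation $\delta(w_j)=\delta(v_j)$ in the union case and $\delta(w_j)=\delta(v_j)+1$ in the join case.

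Finally I would invoke Lemma \ref{Lem1}. Running the Diagonalization Algorithm with $x=-\delta(v_j)$ (duplicate leaves, $w_j=\cup$) resp. $x=-\delta(v_j)-1$ (coduplicate leaves, $w_j=\otimes$) produces $t_j-1$ permanent zeros through repeated execution of \subiib~resp. \subib, so by Theorem \ref{main1} the value $\delta(v_j)$ resp. $\delta(v_j)+1$ is a Laplacian eigenvalue of $G$ with multiplicity $t_j-1$. Substituting the relations of the previous paragraph, in both cases this eigenvalue equals exactly $\delta(w_j)$, which is the assertion.

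I expect the only genuine obstacle to be the bookkeeping in the second step: one must verify that the index set of the sum $\sum_i t_i$ is literally the same in the interior-degree formula and in the leaf-degree formula (both ranging over the $\otimes$-related siblings of $w_j$, with the self-term $t_j$ accounted for separately), so that the subtraction leaves precisely $+1$ in the join case and $0$ in the union case. Once that alignment is confirmed, no spectral computation is required and the reduction to Lemma \ref{Lem1} closes the argument.
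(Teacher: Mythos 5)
Your reduction to Lemma \ref{Lem1} is clean and the degree bookkeeping ($\delta(w_j)=\delta(v_j)$ for $\cup$, $\delta(w_j)=\delta(v_j)+1$ for $\otimes$) is exactly right, but the opening move --- ``$w_j$ has maximum depth, hence it is terminal and all its children are leaves of weight $1$'' --- restricts the lemma to a strictly weaker statement than the one the paper needs and proves. The phrase ``leaves having vertices with weight $p_j$'' in the statement, and the explicit branch ``if $w_j$ is not a terminal vertex of $T_G$'' in the paper's own proof, show that the lemma is meant to apply at intermediate stages of the algorithm: after the interior children of $w_j$ have been processed and collapsed, $w_j$ becomes the deepest interior vertex of the \emph{reduced} cotree, and its $t_j$ ``leaves'' then include relocated representatives carrying weights $p_j>1$. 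This stronger form is what Theorem \ref{main2}'s induction and Theorem \ref{main3} (where the multiplicity is $s_i+t_i-1$, counting the collapsed interior successors) actually invoke. In that situation the children of $w_j$ are not genuine (co)duplicate vertices of $G$ sharing a common vertex degree $\delta(v_j)$, so Lemma \ref{Lem1} simply does not apply and your reduction breaks down.

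The missing ingredient is the paper's use of Lemma \ref{Lem2}: running Diagonalization$(T_G,x=-\delta(w_j))$, each already-processed subcotree $T_{H_i}$ hanging from $w_j$ has its leaves initialized with $\delta(t_i)+\delta(w_j)-\delta(w_j)=\delta(t_i)$, i.e.\ it behaves exactly like Diagonalization$(T_{H_i},x=0)$ on a connected cograph, and therefore leaves precisely one pendant value $0$ in its final iteration. Together with the original leaves of $w_j$ (whose initial value is $\delta(v_j)-\delta(w_j)=0$ in the $\cup$ case) this produces $t_j$ duplicate entries equal to $0$ under $w_j$, whence $t_j-1$ permanent zeros; the $\otimes$ case is then obtained by complementation via Proposition \ref{prop2}. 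Your argument correctly disposes of the terminal subcase (and in the $\otimes$ terminal subcase it is arguably more direct than the paper's complement trick), but without the Lemma \ref{Lem2} step the proof does not establish the lemma in the generality in which it is used.
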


\begin{proof} 
 Let $G$ be a cograph  of order $n$ with cotree $T_G.$ We assume that  leaves of $T_G$ have initialized with  weight $p_i=1$ and the interior vertices with degree $\delta(w_i).$ 
 Let  $w_j $ be an interior vertex  with maximum depth in $T_G$ with $t_j\geq 2$ leaves having vertices with weight $p_j.$ We claim that $\delta(w_j)$ is a Laplacian eigenvalue of $G$  with multiplicity $t_j-1.$ For getting the result  it  suffices to show that Diagonalization $(T_G, x=-\delta(w_j))$ assigns $t_j -1$ permanent zeros after processing the $t_j $ leaves.
 We consider the following cases:\\
\noindent{\em Case 1.} If $w_j = \cup.$  If $w_j$ is a terminal vertex of $T_G,$ in this case, we have that  $\delta(w_j)$ coincides with the degree of vertices in the  leaves $t_j$ according to Proposition \ref{prop2} and hence the result follows according to Lemma \ref{Lem1}. If  $w_j$ is not a terminal vertex of $T_G,$  we have that some leaves have been processed previously. Since duplicate vertices assign a permanent value zero  if and only if the \subiib~occurs then  duplicate vertices must received the value zero from coduplicate vertices during their last iteration.
Let us denote $T_{H_i}$  the subcotree  of $T_G$ with  $w_i$ has been  an immediate successor of $w_j$  having  $t_i$ leaves, such that, after to  process their leaves $t_i$  has a pendant vertex with  assignment zero.
Taking into account that leaves of  $T_{H_i}$ have initialized with $\delta(t_i) +\delta(w_j)-\delta(w_j),$ where $\delta(t_i)$ is the degree of leaves  only in the subcotree $T_{H_i}$  and this  is equivalent to apply the Diagonalization  $(T_{H_i},x=0).$ So, by Lemma \ref{Lem2}, we will have $t_j$ duplicate vertices with assigment equal to zero. Therefore, $t_j -1$ zeros will be  assigned, as desired.

\noindent{\em Case 2.} If $w_j = \otimes.$ Consider the complement graph  $\overline{G}$ and its cotree $T_{\overline{G}}.$
Let $\overline{w_j}$ be the respective vertex of $w_j$ with opposite type in $T_{\overline{G}}$ having  $t_j$ leaves.
 By previous case, $\delta(\overline{w_j})$  is a Laplacian eigenvalue of $\overline{G}$ with multiplicity $t_j-1.$  By Proposition \ref{prop2}, we have  $n- \delta(\overline{w_j}) = \delta(w_j),$ and therefore, $\delta(w_j)$ is a Laplacian eigenvalue of $G$ with multiplicity $t_j -1,$ as desired.  
 \end{proof}

\begin{figure}[t]
{\tt
\begin{tabbing}
aaa\=aaa\=aaa\=aaa\=aaa\=aaa\=aaa\=aaa\= \kill
\> INPUT:  cotree $T_{G}$ with values $p_i$ and $\delta(w_i)$\\
     \> OUTPUT:   $L$-eingenvalues  of $G$\\
      \>\\
     \>   $\mbox{ Algorithm L-eingenvalues}$  \\ 
     \>\> initialize   $p_i =1$ \\
    \> \> {\bf while } $T_{G}$  has  depth $\geq 2$        \\
     \> \> \>  select a set $(v_1, \ldots, v_m)$  (co)duplicate of maximum depth with  parent $w$\\
     \> \> \>  having $\delta(w)$\\
     \> \> \> {\bf if} $ w=\{\otimes, \cup \}$ \\
     \> \> \> \> \>   $v_1,\ldots,v_{m-1} \leftarrow  \delta(w); \hspace*{0,25cm}  v_{m} \leftarrow  \sum_i p_i; \hspace{0,25cm}   T_{G} = T_{G} - (v_1, \ldots, v_{m-1})$ \\
      \>\>{\bf if}    $T_{G}$  has  depth $1$\\
      \> \> \> {\bf if} $ w=\{\otimes, \cup \}$\\
      \> \> \> \>\>    $v_1, \ldots, v_{m-1} \leftarrow  \delta(w); \hspace{0,25cm} v_m \leftarrow 0;\hspace{0,5cm} T_{G} = T_{G} - (v_1, \ldots, v_{m-1})$  \\ 
     \> \>  {\bf end loop}\\
\end{tabbing}
}
\caption{\label{fig04} Algorithm $L$-eigenvalues}
\end{figure}

The next result claims that the Algorithm $L$-eigenvalues computes the Laplacian eigenvalues of cograph $G$ directly from $T_G$  in $O(n)$ time and space.

Given a weighted cotree $T_G$ where the leaves have initialized with value $p_ i=1$ and the interior vertices with their degrees $\delta(w_i).$ The Algorithm $L$-eigenvalues  performs directly on the leaves of $T_G$ which makes the procedure linear in the number of leaves.
Furthermore, in each iteration it records changes of the values on it such that the removed vertices $(v_1, \ldots, v_{m-1})$ represent a Laplacian eigenvalue of $G$ with multiplicity $m-1$ while that remaning vertex $v_m$ with value $\sum_i p_i$  which preserves the degrees of remaining  interior vertices in $T_G -(v_1, \ldots, v_m),$ allowing only $O(n)$ space.

\begin{Thr}
\label{main2}
Let $G$ be a cograph of order $n$  with cotree $T_G$ having depth $l\geq 1.$  We assume that  leaves $t_i$  of $T_G$ have initialized with  weight $p_i=1$ and the interior vertices with degree $\delta(w_i).$ 
The Laplacian eigenvalues of $G$ can be computed in $O(n)$  by the Algorithm  $L$-eigenvalues, in the Figure \ref{fig04}, and they are given by
\begin{itemize}
\item[(i)] $\delta(w_j)$ with multiplicity $t_j-1,$  if $ l \geq 2, w_j=\{\otimes, \cup\}$ and $t_j \geq 2.$ 
\item[(ii)] $\delta(w_j)$ and $0$ with multiplicities $ t_j-1$ and $1,$ if $l=1, w_j= \{\otimes, \cup \}$ and $t_j \geq 2.$
\end{itemize}
\end{Thr}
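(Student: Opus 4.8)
The plan is to prove correctness by induction on the depth $l$ of $T_G$, using Lemma \ref{Lem3} as the engine that extracts one eigenvalue batch per iteration, and then to verify separately that exactly $n$ values are produced and that the work is $O(n)$. For the base case $l=1$ the cotree is a single root $w_j$ of type $\otimes$ or $\cup$ with $t_j$ leaves attached directly, so $G=K_{t_j}$ or $G=\overline{K}_{t_j}$. Using $Spect_{L}(K_{t_j})=\{t_j^{(t_j-1)},0\}$ together with Definition \ref{def1} (which gives $\delta(w_j)=t_j$ when $w_j=\otimes$ and $\delta(w_j)=0$ when $w_j=\cup$), the algorithm's output of $\delta(w_j)$ with multiplicity $t_j-1$ and a single $0$ is exactly $Spect_{L}(G)$; this settles part (ii).

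For the inductive step $l\geq 2$ I would pick a maximum-depth interior vertex $w_j$ with $t_j\geq 2$ leaves and invoke Lemma \ref{Lem3} to conclude that $\delta(w_j)$ is a Laplacian eigenvalue of multiplicity $t_j-1$; this is precisely the batch $v_1,\ldots,v_{m-1}\leftarrow\delta(w)$ of part (i). The crux is to show that deleting $v_1,\ldots,v_{m-1}$ and relocating the surviving leaf $v_m$ under the parent of $w_j$ with weight $\sum_i p_i$ produces a weighted cotree of strictly smaller depth whose instance carries precisely the remaining $n-(t_j-1)$ Laplacian eigenvalues of $G$. I would justify this by reading the collapse of the $t_j$ (co)duplicate leaves as a twin reduction: the vectors $e_{v_a}-e_{v_b}$ span the eigenspace accounting for the $t_j-1$ copies of $\delta(w_j)$, while its complement is represented faithfully by the single weighted vertex $v_m$. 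The weight $\sum_i p_i$ is chosen exactly so that, via Definition \ref{def1} and Proposition \ref{prop2}, the degree $\delta(w)$ of every surviving ancestor computed on the reduced cotree still equals the eigenvalue that the Diagonalization algorithm would assign; the induction hypothesis applied to the reduced cotree then closes the argument. The \emph{main obstacle} I anticipate is exactly this reduction step: rigorously certifying that $\sum_i p_i$ makes the reduced weighted cotree represent precisely the complement of the extracted eigenspace, so that no eigenvalue is lost and none is spuriously created.

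To see that the output has the right cardinality and cost, I would first run a telescoping count. Writing $\tilde t_j=t_j+c_j$ for the number of leaves present when $w_j$ is processed, where $c_j$ is its number of interior children (each contributing one merged leaf), we have $\sum_j t_j=n$ and $\sum_j c_j=r-1$ over the $r$ interior vertices, hence $\sum_j \tilde t_j=n+r-1$. The depth-$\geq 2$ phase processes the $r-1$ non-root vertices, yielding $\sum_{j\neq \mathrm{root}}(\tilde t_j-1)$ eigenvalues, and the depth-$1$ phase yields $(\tilde t_{\mathrm{root}}-1)+1$, for a total of $\sum_j \tilde t_j-(r-1)=n$. Finally, the degrees $\delta(w_i)$ are obtained by one bottom-up sweep of $T_G$ (Definition \ref{def1}), each of the $n$ leaves is selected and assigned exactly once, and relocating $v_m$ updates a single ancestor weight in constant time; storing only the current weights and the extracted values then gives $O(n)$ time and space.
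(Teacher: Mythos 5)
Your proposal follows essentially the same route as the paper: induction on the depth $l$ of the cotree, with the base case $l=1$ settled by the spectra of $K_n$ and $\overline{K}_n$ and the inductive step driven by Lemma \ref{Lem3} applied to a maximum-depth interior vertex, after which the reduced cotree is handed to the induction hypothesis. The reduction step you flag as the main obstacle --- certifying that the weighted cotree obtained after relocating $v_m$ with weight $\sum_i p_i$ is a legitimate instance for the induction hypothesis, which as stated assumes all leaf weights equal $1$ --- is passed over silently in the paper's own proof, and your telescoping count showing that exactly $n$ values are emitted, together with the explicit $O(n)$ cost accounting, goes beyond what the paper writes down.
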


\begin{proof} 
Let $G$ be a cograph of order $n$  with cotree $T_G$ having depth $l\geq 1.$  We assume that  leaves $t_i$ of $T_G$ have initialized with  weight $p_i=1$ and the interior vertices with degree $\delta(w_i).$ 
We prove by induction on depth $l\geq 1.$

For $l=1,$ we have to consider two cases. If $w_j = \cup$ with $n=t_j$ leaves having vertices with weight $p_j=1.$
Since $\delta(w_j) = 0,$ we have that $0$ is a Laplacian eigenvalue with multiplicity $n=t_j.$ If 
$w_j = \otimes$ with $n=t_j$ leaves having vertices with weight $p_j=1.$ Since $\delta(w_j)=\sum_j p_j =n$ then
$n$ and $0$ are the Laplacian eigenvalues of $G$ with multiplicity $t_j-1=n-1$ and $1,$ respectively.

Now, we suppose that  Algorithm $L$-eigenvalues holds for a cograph $G$ with
 cotree $T_G$ having  depth $\leq l.$
Let $G'$ be a cograph with cotree $T_{G'}$ having depth equals to $l+1.$  
If $l+1$ is even (respect. odd), then $T_{G'}$ have $t_j$ leaves of an interior vertex $w_j = \cup$ (respect. $w_j=\otimes$) which are duplicate (respect. coduplicate) vertices. By Lemma \ref{Lem3}, $\delta{(w_j)}$ is a Laplacian eigenvalue with multiplicity
$t_j-1.$  Since the pending vertex is relocated under its parent's, the algorithm moves to the cotree  which has a depth equals to $l.$ Then applying the induction on this cotree,
this completes the proof of Theorem.
\end{proof}


\begin{Ex} We consider the cograph $G=(( \overline{K}_2 \otimes \overline{K}_2)\cup \overline{K}_3)\otimes \overline{K}_5$ with cotree $T_G$  represented in  Figure \ref{fig4}.
We will show that $Spect_{L}(G)=\{12,9,7^6,5^3,0\}.$

The Algorithm $L$-eigenvalues initializes the leaves of $T_G$ with $p_i=1$ and the interior vertices with $\delta(w_i).$ During in each iteration a set  of duplicate or coduplicate vertices selected appear in blue.

\begin{figure}[h!]
\begin{tikzpicture}
   [scale=1,auto=left,every node/.style={circle,scale=0.9}]
  \node[draw,circle,fill=black,label=below:$1$] (o) at (2,5) {};
  
  \node[draw,circle,fill=black, label=below:$1$] (n) at (1.5,5) {};
   \node[draw,circle,fill=black, label=below:$1$] (n1) at (1,5) {};

  \node[draw,circle,fill=black, label=below:$1$] (l) at (3,5) {};
 \node[draw,circle,fill=black, label=below:$1$] (v) at (2.5,5) {};
 \node[draw,circle,fill=black, label=below:$1$] (v1) at (3.5,5) {};
 \node[draw,circle,fill=black, label=below:$1$] (v2) at (4,5) {};
  \node[draw, circle, fill=blue!10, inner sep=0, label=right:$7$] (j) at (3,6) {$\cup$};
  \node[draw,circle,fill=blue!10, inner sep=0, label=left:$12$] (h) at (2,7) {$\otimes$};

  \node[draw, circle, fill=blue!10, inner sep=0, label=left:$5$] (g) at (1,6) {$\cup$};
    
  \node[draw,circle,fill=blue!10, inner sep=0,label=left:$9$] (a) at (0,5) {$\otimes$};
  
  \node[draw, circle, fill=blue!10, inner sep=0, label=right:$7$] (g1) at (0.25,4) {$\cup$};
  \node[draw, circle, fill=black, label=below:$1$] (b) at (0.25,3) {};
  \node[draw,circle,fill=black, label=below:$1$] (c) at (0.75,3) {};
  \node[draw,circle,fill=black, label=below:$1$] (e) at (0.5,5) {};
  
   \node[draw, circle, fill=blue!10, inner sep=0, label=left:$7$] (g2) at (-0.5,4) {$\cup$};
 \node[draw,circle,fill=black,label=below:$1$] (e1) at (-1,3) {};
  \node[draw,circle,fill=black,label=below:$1$] (e2) at (-0.25,3) {};

  \path 
        (a) edge node[below]{} (g2)
(e1) edge node[below]{} (g2)
(e2) edge node[below]{} (g2)

(a) edge node[left]{} (g1)
(g1) edge node[left]{} (b)
(g1) edge node[left]{} (c)

  (g) edge node[below]{} (e)
  (g) edge node[below]{} (n)
 (g) edge node[below]{} (n1)

    (j) edge node[below]{} (v)
    (j) edge node[below]{} (v1)
(j) edge node[below]{} (v2)      
        (g) edge node[left]{}(a)
        (h) edge node[right]{}(j)
        (h) edge node[left]{}(g)

        (j) edge node[right]{}(l)
       (j) edge node[below]{}(o);
\end{tikzpicture}
       \caption{ Initial  $T_G$ with $p_i=1$ and $\delta(w_i).$}
       \label{fig4}
\end{figure}

In the first iteration of the algorithm, duplicate vertices $\{v_1, v_2\}$ are chosen of $w_i$ with maximum depth, represented in  Figure \ref{fig5}.
Since $\delta(w_i)=7$  the assignments are given
$$ v_1 \leftarrow 7 ;\hspace{1cm}v_2 \leftarrow 1+1=2; \hspace{1cm} T_{G} = T_{G} - v_1.$$

Then the vertex $v_1$ is removed and $v_2$ is relocated under its parent's, represented in Figure \ref{fig6}.
It means that $7$ is a Laplacian eigenvalue with multiplicity one.

\begin{figure}[h!]
       \begin{minipage}[c]{0.45 \linewidth}
\begin{tikzpicture}
   [scale=1,auto=left,every node/.style={circle,scale=0.9}]
  \node[draw,circle,fill=black,label=below:$1$] (o) at (2,5) {};
  
  \node[draw,circle,fill=black, label=below:$1$] (n) at (1.5,5) {};
   \node[draw,circle,fill=black, label=below:$1$] (n1) at (1,5) {};

  \node[draw,circle,fill=black, label=below:$1$] (l) at (3,5) {};
 \node[draw,circle,fill=black, label=below:$1$] (v) at (2.5,5) {};
 \node[draw,circle,fill=black, label=below:$1$] (v1) at (3.5,5) {};
 \node[draw,circle,fill=black, label=below:$1$] (v2) at (4,5) {};
  \node[draw, circle, fill=blue!10, inner sep=0, label=right:$7$] (j) at (3,6) {$\cup$};
  \node[draw,circle,fill=blue!10, inner sep=0, label=left:$12$] (h) at (2,7) {$\otimes$};

  \node[draw, circle, fill=blue!10, inner sep=0, label=left:$5$] (g) at (1,6) {$\cup$};
    
  \node[draw,circle,fill=blue!10, inner sep=0, label=left:$9$] (a) at (0,5) {$\otimes$};
  
  \node[draw, circle, fill=blue!10, inner sep=0, label=right:$7$] (g1) at (0.25,4) {$\cup$};
  \node[draw, circle, fill=black, label=below:$1$] (b) at (0.25,3) {};
  \node[draw,circle,fill=black, label=below:$1$] (c) at (0.75,3) {};
  \node[draw,circle,fill=black, label=below:$1$] (e) at (0.5,5) {};
  
   \node[draw, circle, fill=blue!10, inner sep=0,label=left:$7$] (g2) at (-0.5,4) {$\cup$};
 \node[draw,circle,fill=blue,label=below:$1$] (e1) at (-1,3) {};
  \node[draw,circle,fill=blue,label=below:$1$] (e2) at (-0.25,3) {};

  \path 
        (a) edge node[below]{} (g2)
(e1) edge node[below]{} (g2)
(e2) edge node[below]{} (g2)

(a) edge node[left]{} (g1)
(g1) edge node[left]{} (b)
(g1) edge node[left]{} (c)

  (g) edge node[below]{} (e)
  (g) edge node[below]{} (n)
 (g) edge node[below]{} (n1)

    (j) edge node[below]{} (v)
    (j) edge node[below]{} (v1)
(j) edge node[below]{} (v2)      
        (g) edge node[left]{}(a)
        (h) edge node[right]{}(j)
        (h) edge node[left]{}(g)

        (j) edge node[right]{}(l)
       (j) edge node[below]{}(o);
\end{tikzpicture}
\caption{ First iteration}
       \label{fig5}
       \end{minipage}\hfill
       \begin{minipage}[c]{0.4 \linewidth}
\begin{tikzpicture}
   [scale=1,auto=left,every node/.style={circle,scale=0.9}]
  \node[draw,circle,fill=black,label=below:$1$] (o) at (2,5) {};
  
  \node[draw,circle,fill=black, label=below:$1$] (n) at (1.5,5) {};
   \node[draw,circle,fill=black, label=below:$1$] (n1) at (1,5) {};

  \node[draw,circle,fill=black, label=below:$1$] (l) at (3,5) {};
 \node[draw,circle,fill=black, label=below:$1$] (v) at (2.5,5) {};
 \node[draw,circle,fill=black, label=below:$1$] (v1) at (3.5,5) {};
 \node[draw,circle,fill=black, label=below:$1$] (v2) at (4,5) {};
  \node[draw, circle, fill=blue!10, inner sep=0,  label=right:$7$] (j) at (3,6) {$\cup$};
  \node[draw,circle,fill=blue!10, inner sep=0,  label=left:$12$] (h) at (2,7) {$\otimes$};

  \node[draw, circle, fill=blue!10, inner sep=0,  label=left:$5$] (g) at (1,6) {$\cup$};
    
  \node[draw,circle,fill=blue!10, inner sep=0, label=left:$9$] (a) at (0,5) {$\otimes$};
  
  \node[draw, circle, fill=blue!10, inner sep=0,label=right:$7$] (g1) at (0.25,4) {$\cup$};
  \node[draw, circle, fill=blue, label=below:$1$] (b) at (0.25,3) {};
  \node[draw,circle,fill=blue, label=below:$1$] (c) at (0.75,3) {};
  \node[draw,circle,fill=black, label=below:$1$] (e) at (0.5,5) {};
  
   \node[draw, circle, fill=black,label=left:$2$] (g2) at (-0.5,4) {};
 \node[draw,circle,fill=black,label=below:$7$] (e1) at (-1,3) {};

  \path 
        (a) edge node[below]{} (g2)

(a) edge node[left]{} (g1)
(g1) edge node[left]{} (b)
(g1) edge node[left]{} (c)

  (g) edge node[below]{} (e)
  (g) edge node[below]{} (n)
 (g) edge node[below]{} (n1)

    (j) edge node[below]{} (v)
    (j) edge node[below]{} (v1)
(j) edge node[below]{} (v2)      
        (g) edge node[left]{}(a)
        (h) edge node[right]{}(j)
        (h) edge node[left]{}(g)

        (j) edge node[right]{}(l)
       (j) edge node[below]{}(o);
\end{tikzpicture}
       \caption{Second iteration}
       \label{fig6}
        \end{minipage}
\end{figure}

In the next step, again duplicate vertices $\{v_1, v_2\}$ are chosen of $w_i$ with maximum depth.
Since $\delta(w_i)=7$  the assignments are given
$$ v_1 \leftarrow 7 ;\hspace{1cm}v_2 \leftarrow 1+1=2; \hspace{1cm} T_{G} = T_{G} - v_1.$$ 
Then the vertex $v_1$ is removed and $v_2$ is relocated under its parent's, represented in Figure \ref{fig7}.
It means that $7$ is a Laplacian eigenvalue with multiplicity one.

In the next step a pair of coduplicate vertices $\{v_1, v_2\}$ of depth three is chosen. Since $\delta(w_i)=9$ 
and the vertices have weiths $p_i=2,$ the following assigments are made 
$$ v_1 \leftarrow 9 ;\hspace{1cm}v_2 \leftarrow 2+2=4; \hspace{1cm} T_{G} = T_{G} - v_1.$$ 

\begin{figure}[h!]
       \begin{minipage}[c]{0.45 \linewidth}
\begin{tikzpicture}
   [scale=1,auto=left,every node/.style={circle,scale=0.9}]

   [scale=1,auto=left,every node/.style={circle,scale=0.9}]
  \node[draw,circle,fill=black,label=below:$1$] (o) at (2,5) {};
  
  \node[draw,circle,fill=black, label=below:$1$] (n) at (1.5,5) {};
   \node[draw,circle,fill=black, label=below:$1$] (n1) at (1,5) {};

  \node[draw,circle,fill=black, label=below:$1$] (l) at (3,5) {};
 \node[draw,circle,fill=black, label=below:$1$] (v) at (2.5,5) {};
 \node[draw,circle,fill=black, label=below:$1$] (v1) at (3.5,5) {};
 \node[draw,circle,fill=black, label=below:$1$] (v2) at (4,5) {};
  \node[draw, circle, fill=blue!10, inner sep=0,  label=right:$7$] (j) at (3,6) {$\cup$};
  \node[draw,circle,fill=blue!10, inner sep=0,  label=left:$12$] (h) at (2,7) {$\otimes$};

  \node[draw, circle, fill=blue!10, inner sep=0, label=left:$5$] (g) at (1,6) {$\cup$};
    
  \node[draw,circle,fill=blue!10, inner sep=0,label=left:$9$] (a) at (0,5) {$\otimes$};
  
  \node[draw, circle, fill=blue,label=right:$2$] (g1) at (0.25,4) {};
  \node[draw, circle, fill=black, label=below:$7$] (b) at (0.25,3) {};
  \node[draw,circle,fill=black, label=below:$1$] (e) at (0.5,5) {};
  
   \node[draw, circle, fill=blue,label=left:$2$] (g2) at (-0.5,4) {};
 \node[draw,circle,fill=black,label=below:$7$] (e1) at (-1,3) {};

  \path 
        (a) edge node[below]{} (g2)

(a) edge node[left]{} (g1)

  (g) edge node[below]{} (e)
  (g) edge node[below]{} (n)
 (g) edge node[below]{} (n1)

    (j) edge node[below]{} (v)
    (j) edge node[below]{} (v1)
(j) edge node[below]{} (v2)      
        (g) edge node[left]{}(a)
        (h) edge node[right]{}(j)
        (h) edge node[left]{}(g)

        (j) edge node[right]{}(l)
       (j) edge node[below]{}(o);\end{tikzpicture}
\caption{ Third iteration}
       \label{fig7}
       \end{minipage}\hfill
       \begin{minipage}[c]{0.4 \linewidth}
\begin{tikzpicture}
[scale=1,auto=left,every node/.style={circle,scale=0.9}]

   [scale=1,auto=left,every node/.style={circle,scale=0.9}]
  \node[draw,circle,fill=black,label=below:$1$] (o) at (2,5) {};
  
  \node[draw,circle,fill=blue, label=below:$1$] (n) at (1.5,5) {};
   \node[draw,circle,fill=blue, label=below:$1$] (n1) at (1,5) {};

  \node[draw,circle,fill=black, label=below:$1$] (l) at (3,5) {};
 \node[draw,circle,fill=black, label=below:$1$] (v) at (2.5,5) {};
 \node[draw,circle,fill=black, label=below:$1$] (v1) at (3.5,5) {};
 \node[draw,circle,fill=black, label=below:$1$] (v2) at (4,5) {};
  \node[draw, circle, fill=blue!10, inner sep=0,  label=right:$7$] (j) at (3,6) {$\cup$};
  \node[draw,circle,fill=blue!10, inner sep=0,  label=left:$12$] (h) at (2,7) {$\otimes$};

  \node[draw, circle, fill=blue!10, inner sep=0,label=left:$5$] (g) at (1,6) {$\cup$};
    
  \node[draw,circle,fill=blue,label=below:$4$] (a) at (0,5) {};
  
  \node[draw, circle, fill=black, label=below:$7$] (b) at (0,3) {};
  \node[draw,circle,fill=blue, label=below:$1$] (e) at (0.5,5) {};
  
   \node[draw, circle, fill=black,label=below:$9$] (g2) at (1,3) {};
 \node[draw,circle,fill=black,label=below:$7$] (e1) at (-1,3) {};

  \path 


  (g) edge node[below]{} (e)
  (g) edge node[below]{} (n)
 (g) edge node[below]{} (n1)

    (j) edge node[below]{} (v)
    (j) edge node[below]{} (v1)
(j) edge node[below]{} (v2)      
        (g) edge node[left]{}(a)
        (h) edge node[right]{}(j)
        (h) edge node[left]{}(g)

        (j) edge node[right]{}(l)
       (j) edge node[below]{}(o);\end{tikzpicture}
       \caption{Fourth iteration}
       \label{fig8}
        \end{minipage}
\end{figure}

 Figure \ref{fig8} shows the cotree with $v_1$ removed and $v_2$ relocated.
It means that $9$ is a Laplacian eigenvalue with multiplicity one. 
Next, a set of duplicate vertices $\{v_1, v_2, v_3, v_4\}$ with depth two is selected. 
Since $\delta(w_i)=5$ we have the following  assigments
$$ v_1,v_2, v_3 \leftarrow 5; \hspace{0,5cm} v_4 \leftarrow 4+1+1+1=7;  \hspace{0,5cm}T_{G} -(v_1,v_2,v_3).$$

\begin{figure}[h!]
       \begin{minipage}[c]{0.45 \linewidth}
\begin{tikzpicture}
 [scale=1,auto=left,every node/.style={circle,scale=0.9}]

   [scale=1,auto=left,every node/.style={circle,scale=0.9}]
  \node[draw,circle,fill=blue,label=below:$1$] (o) at (2,5) {};
  
  \node[draw,circle,fill=black, label=below:$5$] (n) at (1.25,3) {};
   \node[draw,circle,fill=black, label=below:$5$] (n1) at (2,3) {};

  \node[draw,circle,fill=blue, label=below:$1$] (l) at (3,5) {};
 \node[draw,circle,fill=blue, label=below:$1$] (v) at (2.5,5) {};
 \node[draw,circle,fill=blue, label=below:$1$] (v1) at (3.5,5) {};
 \node[draw,circle,fill=blue, label=below:$1$] (v2) at (4,5) {};
  \node[draw, circle, fill=blue!10, inner sep=0,label=right:$7$] (j) at (3,6) {$\cup$};
  \node[draw,circle,fill=blue!10, inner sep=0,  label=left:$12$] (h) at (2,7) {$\otimes$};

  \node[draw, circle, fill=black,label=left:$7$] (g) at (1,6) {};
    
  \node[draw,circle,fill=black,label=below:$5$] (a) at (2.75,3) {};
  
  \node[draw, circle, fill=black, label=below:$7$] (b) at (-0.25,3) {};
  
   \node[draw, circle, fill=black,label=below:$9$] (g2) at (0.5,3) {};
 \node[draw,circle,fill=black,label=below:$7$] (e1) at (-1,3) {};

  \path 



    (j) edge node[below]{} (v)
    (j) edge node[below]{} (v1)
(j) edge node[below]{} (v2)      
        (h) edge node[right]{}(j)
        (h) edge node[left]{}(g)

        (j) edge node[right]{}(l)
       (j) edge node[below]{}(o);\end{tikzpicture}
\caption{ Fiveth iteration}
       \label{fig9}
       \end{minipage}\hfill
       \begin{minipage}[c]{0.4 \linewidth}
\begin{tikzpicture}
 [scale=1,auto=left,every node/.style={circle,scale=0.9}]

   [scale=1,auto=left,every node/.style={circle,scale=0.9}]
  
  \node[draw,circle,fill=black, label=below:$5$] (n) at (1.25,3) {};
   \node[draw,circle,fill=black, label=below:$5$] (n1) at (2,3) {};

  \node[draw,circle,fill=black, label=below:$7$] (l) at (5.75,3) {};
 \node[draw,circle,fill=black, label=below:$7$] (v) at (5,3) {};
 \node[draw,circle,fill=black, label=below:$7$] (v1) at (3.5,3) {};
 \node[draw,circle,fill=black, label=below:$7$] (v2) at (4.25,3) {};
  \node[draw, circle, fill=blue,label=right:$5$] (j) at (3,6) {};
  \node[draw,circle,fill=blue!10, inner sep=0, label=right:$12$] (h) at (2,7) {$\otimes$};

  \node[draw, circle, fill=blue,label=left:$7$] (g) at (1,6) {};
    
  \node[draw,circle,fill=black,label=below:$5$] (a) at (2.75,3) {};
  
  \node[draw, circle, fill=black, label=below:$7$] (b) at (-0.25,3) {};
  
   \node[draw, circle, fill=black,label=below:$9$] (g2) at (0.5,3) {};
 \node[draw,circle,fill=black,label=below:$7$] (e1) at (-1,3) {};

  \path 



        (h) edge node[right]{}(j)
        (h) edge node[left]{}(g);


\end{tikzpicture}
       \caption{Sixth iteration}
       \label{fig10}
        \end{minipage}
\end{figure}

Figure \ref{fig9} shows the cotree with $\{v_1, v_2, v_3\}$ removed and $v_4$ relocated. It means that $5$ is a Laplacian eigenvalue with multiplicity 3.
Next, another depth  two of duplicate vertices $\{v_1, v_2, v_3, v_4, v_5\}$  is selected. 
Since $\delta(w_i)=7$ we have the following  assigments
$$ v_1,v_2, v_3, v_4 \leftarrow 7; \hspace{0,5cm} v_5 \leftarrow 1+1+1+1+1=5;  \hspace{0,5cm}T_{G} -(v_1,v_2,v_3,v_4).$$
Figure \ref{fig10} shows the cotree with $\{v_1, v_2, v_3, v_4\}$ removed and $v_5$ relocated. It means that $7$ is a Laplacian eigenvalue with multiplicity 4.

\begin{figure}[h!]
\begin{tikzpicture}
 [scale=1,auto=left,every node/.style={circle,scale=0.9}]

   [scale=1,auto=left,every node/.style={circle,scale=0.9}]
  
  \node[draw,circle,fill=black, label=below:$5$] (n) at (1.25,3) {};
   \node[draw,circle,fill=black, label=below:$5$] (n1) at (2,3) {};

  \node[draw,circle,fill=black, label=below:$7$] (l) at (5.75,3) {};
 \node[draw,circle,fill=black, label=below:$7$] (v) at (5,3) {};
 \node[draw,circle,fill=black, label=below:$7$] (v1) at (3.5,3) {};
 \node[draw,circle,fill=black, label=below:$7$] (v2) at (4.25,3) {};
  \node[draw, circle, fill=black,label=right:$0$] (j) at (3,6) {};
  \node[draw,circle,fill=blue!10, inner sep=0, label=right:$$] (h) at (2,7) {$\otimes$};

  \node[draw, circle, fill=black,label=left:$12$] (g) at (1,6) {};
    
  \node[draw,circle,fill=black,label=below:$5$] (a) at (2.75,3) {};
  
  \node[draw, circle, fill=black, label=below:$7$] (b) at (-0.25,3) {};
  
   \node[draw, circle, fill=black,label=below:$9$] (g2) at (0.5,3) {};
 \node[draw,circle,fill=black,label=below:$7$] (e1) at (-1,3) {};

  \path 



        (h) edge node[right]{}(j)
        (h) edge[dashed] node[left]{}(g);


\end{tikzpicture}
       \caption{ Final step }
       \label{fig11}
\end{figure}

Finally, the last step of Algorithm $L$-eigenvalues is to process the coduplicate vertices $\{v_1,v_2\}$ with assignments $p_1=7$ and $p_2=5,$ represented in  Figure \ref{fig10}.
Since the cotree has depth 1 and $\delta(w_i)=12,$  we have the following  assigments  are made
$$ v_1 \leftarrow 12; \hspace{0,5cm} v_2 \leftarrow 0.$$
It means that $12$ and $0$ are Laplacian eigenvalues with multiplicities one. This step is illustrated in  Figure \ref{fig11}.
The algorithm stops, since the cotree has only one leaf, and we have that  $Spect_{L}(G)= \{12, 9,7^6, 5^3, 0\}.$
\end{Ex}

\section{The number of spanning trees of  a cograph}
\label{Sec4}
 We recall that a spanning tree of   a connected  undirected graph $G$ on $n$ vertices is  a connected  $(n-1)$-edge subgraph of $G.$
 We finalize this paper using the Algorithm $L$-eigenvalues
as a tool  for obtaining a closed formula for the number of spanning trees of  a cograph.

The following result is due \cite{Kelmans}.

\begin{Lem}
\label{Lem5}
Let $G$ be a connected graph on $n$ vertices and let $L(G)$ be its Laplacian matrix. Let $0= \mu_1 \leq \mu_2 \leq \ldots \leq \mu_n$ be the eigenvalues of $L(G).$
Then the number of spanning trees of $G$ equals the product $\prod_{i=2}^{n} \mu_i /n .$  
 \end{Lem}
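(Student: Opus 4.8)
The plan is to derive this from the classical Matrix--Tree Theorem of Kirchhoff together with a bookkeeping of the coefficients of the characteristic polynomial of $L(G)$. The point is that the product $\prod_{i=2}^n \mu_i$ of the nonzero Laplacian eigenvalues is exactly one of these coefficients, while each principal cofactor of $L(G)$ counts spanning trees.

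First I would recall the Matrix--Tree Theorem in its cofactor form: for a connected graph $G$ on $n$ vertices, every cofactor of $L(G)$ equals the number of spanning trees $\tau(G)$; equivalently, each of the $n$ principal $(n-1)\times(n-1)$ minors obtained by deleting row $i$ and column $i$ from $L(G)$ equals $\tau(G)$. I would treat this as the input result (its standard proof is a Cauchy--Binet expansion of the reduced oriented incidence matrix, and this is precisely the step I expect to be the real obstacle if one insists on a self-contained argument; citing it makes the remainder routine).

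Next I would expand the characteristic polynomial $p(x)=\det(xI_n-L(G))=\prod_{i=1}^n (x-\mu_i)$ in the form $p(x)=\sum_{k=0}^n (-1)^k E_k\, x^{\,n-k}$, where $E_k$ denotes the sum of all $k\times k$ principal minors of $L(G)$. Reading off the coefficient of $x^1$ (the case $k=n-1$) gives $(-1)^{n-1}E_{n-1}$, and $E_{n-1}$ is by definition the sum of the $n$ principal $(n-1)\times(n-1)$ minors of $L(G)$. By the Matrix--Tree Theorem each of these equals $\tau(G)$, so $E_{n-1}=n\,\tau(G)$.

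Finally, since $G$ is connected, $\mu_1=0$ is a simple eigenvalue, so $p(x)=x\prod_{i=2}^n (x-\mu_i)$, and the coefficient of $x^1$ equals the constant term of $\prod_{i=2}^n (x-\mu_i)$, namely $\prod_{i=2}^n(-\mu_i)=(-1)^{n-1}\prod_{i=2}^n \mu_i$. Equating the two expressions for the coefficient of $x^1$ yields $(-1)^{n-1}\,n\,\tau(G)=(-1)^{n-1}\prod_{i=2}^n \mu_i$, whence $\tau(G)=\frac{1}{n}\prod_{i=2}^n \mu_i$, as claimed. A cosmetic alternative to the minor-counting step is to note that $L(G)+\tfrac1n J$ has eigenvalues $1,\mu_2,\dots,\mu_n$ (since the all-ones vector spans the kernel of $L(G)$), so $\det\!\left(L(G)+\tfrac1n J\right)=\prod_{i=2}^n \mu_i$, and then identify this determinant with $n\,\tau(G)$ via the same Matrix--Tree input; either route reduces the lemma to that one classical fact.
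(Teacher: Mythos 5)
Your argument is correct. Note, however, that the paper does not prove this lemma at all: it is quoted verbatim as a known result of Kelmans and Chelnokov, so there is no internal proof to compare against. What you give is the standard textbook derivation --- take the cofactor form of the Matrix--Tree Theorem as input, observe that the coefficient of $x$ in $\det(xI_n - L(G)) = \sum_{k=0}^{n}(-1)^k E_k x^{n-k}$ is $(-1)^{n-1}E_{n-1}$ with $E_{n-1} = n\,\tau(G)$, and match it with $(-1)^{n-1}\prod_{i=2}^{n}\mu_i$ coming from the factorization $p(x) = x\prod_{i=2}^{n}(x-\mu_i)$. All the steps check out (in fact you do not even need simplicity of the zero eigenvalue for the factorization step; $\mu_1 = 0$ suffices, and connectivity is only needed so that the product is the actual positive tree count). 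The alternative route via $\det\!\left(L(G)+\tfrac{1}{n}J\right) = \prod_{i=2}^{n}\mu_i = n\,\tau(G)$ is likewise sound. The only honest caveat is the one you already flag: the entire content rests on the Matrix--Tree Theorem itself, which you cite rather than prove, exactly as the paper cites the lemma rather than proving it.
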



\begin{Thr}
\label{main3}
Let $G$ be a connected cograph on $n$ vertices with cotree $T_G$ having $r$ interior vertices $\{w_1, \ldots, w_r\}.$ 
Let $w_1$ be the cotree's root and  suppose that any interior vertex $w_i$ of $T_G$ has $s_i$ successors vertices  and $t_i$ leaves. Then the number of spanning trees of $G$ equals
$$  \prod_{i=2}^{r}(\delta(w_i))^{s_i + t_i -1} \cdot n^{s_1 +t_1 -2}$$
where $\delta(w_i)$ is the degree of interior vertex $w_i.$
\end{Thr}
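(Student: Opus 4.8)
The plan is to combine the spectral description produced by the Algorithm $L$-eigenvalues (Theorem \ref{main2}) with Kelmans' formula (Lemma \ref{Lem5}). First I would read off the complete Laplacian spectrum of $G$ directly from the run of the algorithm on $T_G$. The crucial bookkeeping point is that when the algorithm reaches an interior vertex $w_i$, the set of (co)duplicate vertices sitting directly under $w_i$ consists of its $t_i$ original leaves together with exactly $s_i$ vertices that were relocated upward, one from each of its $s_i$ interior successors after those were processed. Hence $w_i$ is processed as a block of $s_i + t_i$ vertices, and by Lemma \ref{Lem3} the value $\delta(w_i)$ occurs as a Laplacian eigenvalue with multiplicity $(s_i + t_i) - 1$.

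Next I would treat the root $w_1$ separately. Since $G$ is connected, $w_1$ is a $\otimes$-vertex and $\delta(\overline{w_1}) = 0$, so the second statement of Proposition \ref{prop2} gives $\delta(w_1) = n$. The final pass of the algorithm, when the cotree has depth $1$, processes the $s_1 + t_1$ vertices under the root and, by part (ii) of Theorem \ref{main2}, produces the eigenvalue $n$ with multiplicity $s_1 + t_1 - 1$ together with a single eigenvalue $0$. As a consistency check I would verify that the total number of eigenvalues is $n$: using that every non-root interior vertex has a unique interior parent, $\sum_{i=1}^r s_i = r - 1$, while $\sum_{i=1}^r t_i = n$; thus $\sum_{i=1}^r (s_i + t_i - 1) + 1 = (r-1) + n - r + 1 = n$, as required.

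Finally I would substitute this spectrum into Lemma \ref{Lem5}. The only zero eigenvalue is the one produced at the root, so the nonzero eigenvalues are $n$ with multiplicity $s_1 + t_1 - 1$ and $\delta(w_i)$ with multiplicity $s_i + t_i - 1$ for $2 \le i \le r$. Therefore
$$\tau(G) = \frac{1}{n}\, n^{\,s_1 + t_1 - 1} \prod_{i=2}^{r} \big(\delta(w_i)\big)^{\,s_i + t_i - 1} = n^{\,s_1 + t_1 - 2} \prod_{i=2}^{r} \big(\delta(w_i)\big)^{\,s_i + t_i - 1},$$
where $\tau(G)$ denotes the number of spanning trees of $G$, which is exactly the claimed formula.

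I expect the main obstacle to be the first step: justifying rigorously that the multiplicity contributed by $w_i$ is $s_i + t_i - 1$ rather than the $t_i - 1$ that appears verbatim in the statements of Lemma \ref{Lem3} and Theorem \ref{main2}. This requires tracking that each interior successor of $w_i$ deposits exactly one relocated vertex under $w_i$ (carrying the accumulated weight $\sum_i p_i$), so that the symbol ``$t_j$'' appearing in those earlier statements must be read as the processing-time count $s_i + t_i$ in the present notation. Once this identification is pinned down, the remaining steps are a routine multiplicity count and a direct application of Kelmans' theorem.
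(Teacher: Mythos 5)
Your proposal is correct and follows essentially the same route as the paper: deduce from the algorithm that each non-root interior vertex $w_i$ contributes the eigenvalue $\delta(w_i)$ with multiplicity $s_i+t_i-1$ (because its $s_i$ interior successors each deposit one relocated vertex before $w_i$ is processed), handle the root separately, and then apply Kelmans' formula. Your explicit justification of the shift from $t_i-1$ to $s_i+t_i-1$ and the check that the multiplicities sum to $n$ are in fact spelled out more carefully than in the paper's own proof.
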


\begin{proof}
Let $G$ be a connected cograph on $n$ vertices with cotree $T_G$ having  $r$ interior vertices $\{w_1, \ldots, w_r\}.$ We assume that $w_1$ is the cotree's root and each $w_i$ has $s_i$ successors vertices and $t_i$ leaves. It is sufficient to show that $\delta(w_i)$ correspondent to a Laplacian eigenvalue of $G$ with multiplicity $s_i +t_i-1.$ 

If $w_i =\cup$  and it is a terminal vertex, we have that $\delta(w_i)$ coincides with the degree of vertices in the leaves $t_i.$ So, by Lemma \ref{Lem1} we are done. Now, if $w_i$ is not a terminal vertex, we have that $w_i$ has $s_i$ successors vertices and $t_i$ leaves. After the Algorithm $L$-eigenvalues processes the $s_i$ interior vertices, by Lemma \ref{Lem3}, $\delta(w_i)$ correspondent a Laplacian eigenvalue of $G$ with multiplicity $s_i +t_i-1.$
The proof is analogous for an interior vertex $w_i$ of $\otimes$-type.

To finish, we need to account the Laplacian eigenvalue given by the interior vertex of cotree's root.
Let $w_1=\otimes$ be the interior vertex of cotree's root of $T_G$ having $s_1$ successors vertices and $t_1$ leaves. 
Since the Algorithm $L$-eigenvalues  assigns to $w_1$ the Laplacian eigenvalue $n$ with multiplicity $s_1 +t_1-1$ and according Lemma \ref{Lem5} we must to divide the product of Laplacian eigenvalues  by factor $n,$
follows $n$ is a factor with multiplicity $s_1 +t_1-2,$ as desired.
\end{proof}

As illustration, we exhibit an example for obtain the number of spanning trees of a cograph.

\begin{Ex}
Let $G$ be a cograph having cotree $T_G$ as the Figure \ref{fig12} has shown.

\begin{figure}[h!]
\begin{tikzpicture}
   [scale=1,auto=left,every node/.style={circle,scale=0.9}]
  \node[draw,circle,fill=black,label=below:$$] (o) at (2,5) {};
  
  \node[draw,circle,fill=black, label=below:$$] (n) at (1.5,5) {};
   \node[draw,circle,fill=black, label=below:$$] (n1) at (1,5) {};

  \node[draw,circle,fill=black, label=below:$$] (l) at (3,5) {};
 \node[draw,circle,fill=black, label=below:$$] (v) at (2.5,5) {};
 \node[draw,circle,fill=black, label=below:$$] (v1) at (3.5,5) {};
 \node[draw,circle,fill=black, label=below:$$] (v2) at (4,5) {};
  \node[draw, circle, fill=blue!10, inner sep=0, label=left:$w_3$] (j) at (3,6) {$\cup$};
  \node[draw,circle,fill=blue!10, inner sep=0, label=left:$w_1$] (h) at (2,7) {$\otimes$};

  \node[draw, circle, fill=blue!10, inner sep=0, label=left:$w_2$] (g) at (1,6) {$\cup$};
    
  \node[draw,circle,fill=blue!10, inner sep=0, label=left:$w_4$] (a) at (0,5) {$\otimes$};
  
  \node[draw, circle, fill=blue!10, inner sep=0, label=right:$w_6$] (g1) at (0.25,4) {$\cup$};
  \node[draw, circle, fill=black, label=below:$$] (b) at (0.25,3) {};
  \node[draw,circle,fill=black, label=below:$$] (c) at (0.75,3) {};
  \node[draw,circle,fill=black, label=below:$$] (e) at (0.5,5) {};
  
   \node[draw, circle, fill=blue!10, inner sep=0, label=left:$w_5$] (g2) at (-0.5,4) {$\cup$};
 \node[draw,circle,fill=black,label=below:$$] (e1) at (-1,3) {};
  \node[draw,circle,fill=black,label=below:$$] (e2) at (-0.25,3) {};

  \path 
        (a) edge node[below]{} (g2)
(e1) edge node[below]{} (g2)
(e2) edge node[below]{} (g2)

(a) edge node[left]{} (g1)
(g1) edge node[left]{} (b)
(g1) edge node[left]{} (c)

  (g) edge node[below]{} (e)
  (g) edge node[below]{} (n)
 (g) edge node[below]{} (n1)

    (j) edge node[below]{} (v)
    (j) edge node[below]{} (v1)
(j) edge node[below]{} (v2)      
        (g) edge node[left]{}(a)
        (h) edge node[right]{}(j)
        (h) edge node[left]{}(g)

        (j) edge node[right]{}(l)
       (j) edge node[below]{}(o);
\end{tikzpicture}
       \caption{ Cotree $T_G$ }
       \label{fig12}
\end{figure}

We have that $T_G$ has a set of interior vertices  $\{w_1, w_2, w_3, w_4, w_5, w_6\}$ where $w_1$ is the cotree's root of  $T_G.$
Computing the degrees of interior vertices and their respective number of successors vertices and leaves, we have
$$\delta(w_1)= 12, s_1=2, t_1=0; \delta(w_2)=5, s_2=1, t_2=3;\delta(w_3)=7,s_3=0, t_3=5;$$
$$\delta(w_4)=9, s_4=2, t_4=0;\delta(w_5)=7, s_5=0, t_5=2;  \delta(w_6)=7, s_6=0, t_6=2;$$
By Theorem \ref{main3}, we have the number of spanning trees of $G$ equals
$$ (5)^3 \cdot (7)^4 \cdot (9)^1 \cdot (7)^1\cdot (7)^1\cdot (12)^0 .$$
\end{Ex}

\section{Acknowledgements}
This work was part of the Post-Doctoral studies of Fernando C. Tura, while visiting Georgia State University, on leave from UFSM and supported by CNPq Grant
200716/2022-0. Guantao Chen acknowledges partial support of NSF grant DMS-2154331.



\end{document}